\newtheorem{theorem}{Theorem}
\newtheorem{Theorem}{Theorem}[section]
\newtheorem{proposition}[Theorem]{Proposition}
\newtheorem{corollary}[Theorem]{Corollary}
\newtheorem{lemma}[Theorem]{Lemma}
\theoremstyle{definition}
\newtheorem{definition}[Theorem]{Definition}
\DeclareMathOperator{\rank}{\text{\upshape rank}}
\DeclareMathOperator{\im}{{}\textit{im}\,{}}
\newcommand{\wedgebar}{\raisebox{-3pt}{$\overset{\text{\Tiny\raisebox{2pt}{\text{$\smash{\wedge}$}}}}{\smash{{}-{}}}$}}
\newcommand{\X}{\mathcal{X}}
\newcommand{\Ocal}{\mathcal{O}}
\newcommand{\Ccal}{\mathcal{C}}
\newcommand{\W}{\text{\upshape W}}
\title{On Generalized Whitehead Products}
\author{Brayton Gray}
\address{Department of Mathematics, Statistics and Computer Science\\
         University of Illinois at Chicago\\
         851 S.~Morgan Street\\
         Chicago, IL, 60607-7045, USA} 
\email{brayton@uic.edu}
\begin{document}


\maketitle

Whitehead products have played an important role in
unstable homotopy. They were originally introduced \cite{W}
as a bilinear pairing of homotopy groups:
\[
\pi_m(X)\otimes\pi_n(X)\to \pi_{m+n-1}(X)\quad m,n>1.
\]
This was generalized (\cite{A},\cite{C},\cite{H}) by constructing a map:
\[
\W\colon S(A\wedge B)\to SA\vee SB.
\]
Precomposition with $\W$ defines a function on based homotopy
classes:
\[
[SA,X]\times [SB,X]\to [S(A\wedge B),X]
\]
which is bilinear in case $A$ and $B$ are suspensions.

The case where $A$ and $B$ are Moore spaces was central
to the work of Cohen, Moore and Neisendorfer (\cite{CMN}). In
\cite{A93} and in particular \cite{AG}, this work was generalized.
Much of this has since been simplified
in~\cite{GT}, but further understanding will require 
a generalization from suspensions to co-H spaces.

The purpose of this work is to carry out and study
such a generalization. 
Let $\Ccal\Ocal$ be the category of simply connected co-H spaces
and co-H maps. We define a functor: 
\begin{gather*}
\Ccal\Ocal\times \Ccal\Ocal\to \Ccal \Ocal\\
(G,H)\to G\circ H
\end{gather*}
and a natural transformation:
\begin{equation}\label{eq1}
\W\colon G\circ H\to G\vee H
\end{equation}
generalizing the Whitehead product map. The existence of
$G\circ H$ generalizes a result of Theriault~\cite{T} who showed that 
the smash product of two simply connected co-associative
co-H spaces is the suspension of a co-H space. We do not
need the co-H spaces to be co-associative and require
only one of them to be simply connected.\footnote{In fact we can define
$G\circ H$ for any two co-H spaces
but require at least one of them to be either simply
connected or a suspension in order to obtain the co-H space
structure map on $G\circ H$.} We call $G\circ H$
the Theriault product of $G$ and~$H$.

We summarize our results in the following theorems.
\begin{theorem}\label{theor1}
There is a functor: $\Ccal O\times\Ccal\Ocal\to \Ccal\Ocal$ given by
\[
(G,H)\to G\circ H
\]
and equivalences in $\Ccal O$
\begin{itemize}
\item[(a)]
$(SX)\circ H\simeq X\wedge H$
\item[(b)]
$S(G\circ H)\simeq G\wedge H$
\item[(c)]
$(G_1\vee G_2)\circ H\simeq G_1\circ H\vee G_2\circ H$
\end{itemize}
and homotopy equivalences:
\begin{itemize}
\item[(d)]
$G\circ H\simeq H\circ G$
\item[(e)]
$(G\circ H)\circ K\simeq G\circ(H\circ K)$
\end{itemize}
\end{theorem}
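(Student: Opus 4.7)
The plan is to first give a construction of $G \circ H$ and then verify the five properties. For the construction, the key input is that any simply connected co-H space $G$ is a retract of a suspension: there exist a space $X$ and maps $G \xrightarrow{i} SX \xrightarrow{r} G$ with $r \circ i \simeq 1_G$ (for instance one may take $X = \Omega G$, with $r$ the evaluation and $i$ the co-H coretraction). Given such retraction data and a co-H space $H$, the maps $i \wedge 1$ and $r \wedge 1$ exhibit $G \wedge H$ as a retract of $SX \wedge H = S(X \wedge H)$. I would then define $G \circ H$ to be the corresponding retract of $X \wedge H$, equipped with the co-H structure inherited from $SX$. Functoriality, and independence from the choice of retract data, will require care and should use the co-H hypotheses on $G$ and $H$ to compare two constructions via explicit retract zig-zags.

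Parts (a) and (b) then fall out of the construction. For (a), when $G = SX$ one may take $i$ and $r$ to be the identity, whence $(SX) \circ H \simeq X \wedge H$ in $\Ccal\Ocal$. For (b), smashing the retract data for $G$ with $H$ and then suspending once exhibits $S(G \circ H) \simeq G \wedge H$ as co-H spaces. Part (c) uses that a wedge of retracts is a retract of the wedge and that the smash distributes over the wedge: from $G_j$ a retract of $SX_j$, the wedge $G_1 \vee G_2$ is a retract of $S(X_1 \vee X_2)$, and smashing with $H$ yields the desired splitting of $(G_1 \vee G_2) \circ H$ as $G_1 \circ H \vee G_2 \circ H$.

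The harder statements are (d) and (e), where the conclusion is only a homotopy equivalence, reflecting the fact that the co-H structures on the two sides come from different retract data. For commutativity (d), suspending gives $S(G \circ H) \simeq G \wedge H \simeq H \wedge G \simeq S(H \circ G)$ via the swap; the content is then to desuspend this equivalence. Since $G \circ H$ and $H \circ G$ are themselves simply connected co-H spaces (each a retract of a smash product that is a suspension), a Ganea--James style desuspension criterion should apply, modulo bookkeeping with the comultiplications used in the two constructions.

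For associativity (e), the strategy is analogous but iterated: applying (b) twice yields
\[
S^{2}\bigl((G\circ H)\circ K\bigr) \simeq G \wedge H \wedge K \simeq S^{2}\bigl(G\circ(H\circ K)\bigr),
\]
and the task is to descend this double-suspension equivalence to a homotopy equivalence of the underlying spaces. This is where I expect the main obstacle to lie: uniqueness of desuspension is much more delicate for co-H spaces than for suspensions, and the two natural presentations of $G \wedge H \wedge K$ as the suspension of a suspension of a co-H space must be carefully compared. I anticipate that, rather than appealing to a generic desuspension theorem, the proof will construct explicit retract zig-zags relating the two iterated Theriault products, using the co-H hypothesis to invert the intermediate comparison maps.
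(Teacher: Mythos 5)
The central gap is in your construction of $G\circ H$. Writing $G$ as a retract of $SX$ exhibits $G\wedge H$ as a retract of $S(X\wedge H)$, but there is no ``corresponding retract of $X\wedge H$'': the idempotent $(i\wedge 1)(r\wedge 1)$ realizing that retraction is a self-map of $S(X\wedge H)$ that does not desuspend to a self-map of $X\wedge H$, so the object you propose to call $G\circ H$ is simply not defined. The paper's construction uses \emph{both} co-H structures: it forms two idempotents $e_1=(\nu_1\wedge 1)(\epsilon_1\wedge 1)$ and $e_2=(1\wedge\nu_2)(1\wedge\epsilon_2)$ on $S(\Omega G\wedge\Omega H)$, one suspension level below $G\wedge H$. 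The composite $e_1e_2$ is \emph{not} an idempotent --- moving the suspension coordinate introduces a sign, and $(e_1e_2)_*^2=-(e_1e_2)_*$ --- so one cannot just split off a retract; instead $G\circ H$ is defined as the telescope $T(e_1e_2)$, and Proposition~\ref{prop1.1} on quasi-idempotents is what yields $S(\Omega G\wedge\Omega H)\simeq T(e_1e_2)\vee T(1+e_1e_2)$ and hence the co-H structure on $T(e_1e_2)$. This sign phenomenon is the heart of the construction and is entirely absent from your proposal; in particular your claim that (a) ``falls out of the construction'' begs the question, since even $SX\circ H\simeq X\wedge H$ requires an argument in the paper (factoring $e_2e_1$ through $X\wedge H$ and invoking Corollary~\ref{cor1.2}).

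Your treatments of (d) and (e) also miss the actual mechanisms. For (d) there is no appeal to a desuspension criterion --- an equivalence $S(G\circ H)\simeq S(H\circ G)$ between suspensions of simply connected co-H spaces need not desuspend, and no general Ganea--James theorem supplies this. The paper gets commutativity essentially for free from the telescope identity $T(f_1f_2)\simeq T(f_2f_1)$ (Proposition~\ref{prop1.3}) applied to $f_1=e_1$, $f_2=e_2$. For (e) you are right that explicit comparison maps are needed, but the logic is not to ``descend'' an equivalence of double suspensions: the paper first constructs a specific map $\theta'\psi\colon (G\circ H)\circ K\to G\circ(H\circ K)$ from telescope presentations of all the factors (Proposition~\ref{prop2.4}), and only then uses Lemma~\ref{lem2.6} to show $S^2(\theta'\psi)$ is a homotopy equivalence, concluding that $\theta'\psi$ is a homology isomorphism between simply connected spaces and hence an equivalence. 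Without the telescope/quasi-idempotent framework none of these steps is available to you.
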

\begin{theorem}\label{theor2}
There is a natural transformation:
\[
\W\colon G\circ H\to G\vee H
\]
which is the Whitehead product map \textup{(\ref{eq1})} in case $G$ and $H$ are
both suspensions. Furthermore, there is a homotopy equivalence:
\[
G\times H\simeq G\vee H\cup_{\W} C(G\circ H).
\]
\end{theorem}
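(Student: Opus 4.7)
My plan is to construct $\W$ as a desuspension of the standard Puppe connecting map, verify its compatibility with the classical Whitehead product, and then recognize $G\times H$ as the mapping cone of $\W$.

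I would start with the cofibration $G \vee H \xrightarrow{j} G \times H \xrightarrow{q} G \wedge H$, whose cofiber is identified by Theorem \ref{theor1}(b) as $S(G \circ H)$. Its Puppe connecting map is
\[
\partial\colon S(G \circ H) \simeq G \wedge H \longrightarrow S(G \vee H) \simeq SG \vee SH.
\]
The map $\W\colon G \circ H \to G \vee H$ would then be defined as a desuspension of $\partial$. Both the domain and the target lie in $\Ccal\Ocal$ (using Theorem \ref{theor1} for the domain), so each sits as a retract of its own loop--suspension; this makes the suspension homomorphism $[G\circ H, G\vee H] \to [S(G\circ H), S(G\vee H)]$ a split injection, whose image is controlled by compatibility with the co-H coactions. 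Because $\partial$ is itself built from the cofiber pinch map, which is natural with respect to those coactions, its class lies in this image and yields $\W$ canonically; naturality in $(G,H)$ then follows from naturality of the whole construction.

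To verify agreement with the classical Whitehead map when $G = SA$ and $H = SB$, Theorem \ref{theor1}(a) gives $G \circ H \simeq S(A \wedge B)$, and the classical $\W$ is itself defined (up to equivalence) as the desuspended connecting map of $SA \vee SB \to SA \times SB \to S^2(A \wedge B)$. Both maps thus solve the same desuspension problem and so must coincide. For the cone identification, the mapping cone $C_\W = G \vee H \cup_\W C(G \circ H)$ fits into a cofibration
\[
G \vee H \to C_\W \to S(G \circ H) \simeq G \wedge H
\]
whose connecting map is $S\W = \partial$, matching that of $G\vee H \to G\times H \to G\wedge H$. I would then build a comparison map $\Phi\colon C_\W \to G\times H$: the composite $j\circ \W$ is null-homotopic because it represents the Whitehead product of the two inclusions into a \emph{product}, so $\W$ extends over the cone to give $\Phi$. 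The resulting map of cofiber sequences is the identity on the ends, so by the five lemma in homology together with Whitehead's theorem --- available because $G$ and $H$ are simply connected --- $\Phi$ is a homotopy equivalence.

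The main obstacle I foresee is Step 1: producing a canonical, natural desuspension $\W$ of $\partial$. Arbitrary maps of suspensions do not desuspend, so the argument must genuinely use that we stay inside $\Ccal\Ocal$, where the co-H coactions pin down the desired lift; getting the formalism to deliver $\W$ uniquely up to a controlled ambiguity, and hence functorially in $(G,H)$, is the technical heart of the theorem.
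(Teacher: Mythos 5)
Your proposal has genuine gaps in both halves, and in each case the gap sits exactly where the paper has to do real work. For the construction of $\W$, you define it as a desuspension of the Puppe connecting map $\partial\colon S(G\circ H)\to S(G\vee H)$ and justify the existence of a canonical desuspension by asserting that the suspension homomorphism $[G\circ H, G\vee H]\to[S(G\circ H),S(G\vee H)]$ is a split injection. That assertion is not justified: a co-H structure on the domain gives a splitting of $[X,Y]\to[S\Omega X,Y]$, not of the suspension map on homotopy classes, and arbitrary maps between suspensions neither desuspend nor desuspend uniquely. You flag this yourself as the ``technical heart,'' but that means the map $\W$ is never actually produced. The paper sidesteps desuspension entirely: since $G\circ H=T(e_1e_2)$ comes equipped with a canonical section $\psi\colon G\circ H\to S(\Omega G\wedge\Omega H)$ of the telescope projection, one simply sets $\W=\omega\circ\xi\circ\psi$, where $\xi$ is the standard equivalence $S(\Omega G\wedge\Omega H)\simeq\Omega G*\Omega H$ and $\omega$ is the fiber inclusion of $\Omega G*\Omega H\to G\vee H\to G\times H$. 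Naturality is then immediate from naturality of $\psi$ and $\omega$, and the comparison with the classical Whitehead product comes from Theorem \ref{theor1}(a) plus the classical identification of $\omega$.

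For the equivalence $C_{\W}=G\vee H\cup_{\W}C(G\circ H)\simeq G\times H$, the extension $\Phi$ exists for the reason the paper gives (the composite into $G\times H$ factors through the fiber $\Omega G*\Omega H$, so is null), but your five-lemma step fails as stated. The map of cofibration sequences is the identity only on the $G\vee H$ end; on the other end you get an \emph{induced} map $\bar\Phi\colon S(G\circ H)\to G\wedge H$ that depends on the chosen null-homotopy, and showing that it (for some, or every, choice) is an equivalence is precisely the content of the theorem --- assuming it is ``the identity on the ends'' begs the question. This is why the paper's Proposition \ref{prop3.8} is so much longer: it first produces a section $\zeta$ of $\Omega\phi$ to get surjectivity of $(\Omega\phi)_*$, then proves Lemma \ref{lem3.9} (the complementary wedge summand $SQ$ of $\Omega G*\Omega H$ maps trivially into $C$), applies Ganea's fibration $F*\Omega B\to E\cup CF\to B$, and finally compares Poincar\'e series of loop spaces (all tensor algebras) to bound $\rank H_i(\Omega C)$ from above by $\rank H_i(\Omega G\times\Omega H)$. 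Some such quantitative input is needed; your outline does not supply a substitute for it.
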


The next theorem concerns the inclusion of the fiber in certain
standard fibration sequences \cite{Gra71}:
\begin{gather*}
G\rtimes \Omega H\xrightarrow{\iota_1} G\vee H\xrightarrow{\pi_2}H\\
\Omega G\ast \Omega H\xrightarrow{\iota_2}G\vee H\xrightarrow{} G\times H.
\end{gather*}
Define $\text{ad}^n(H)(G)$ inductively by $\text{ad}^0(H)(G)=G$ and
\[
\text{ad}^n(H)(G)=[\textit{ad}^{n-1}(H)(G)]\circ H
\]
and an iterated Whitehead product
\[
\textit{ad}^n\colon \textit{ad}^n(H)(G)\to G\vee H
\]
as the composition:
\[
\textit{ad}^n(H)(G)\xrightarrow{\W}\textit{ad}^{n-1}(H)(G)\vee H\xrightarrow{\textit{ad}^{n-1}\vee1} G\vee H\vee H\to G\vee H.
\]
\begin{theorem}\label{theor3}
Suppose $G$ and $H$ are simply connected co-H spaces. Then there
are homotopy equivalences:
\begin{itemize}
\item[(a)]
$G\rtimes \Omega H\simeq \bigvee\limits_{n\geqslant 0}\textit{ad}^n(H)(G)$ where
$\iota_1$ corresponds to $\text{ad}^n$
on the appropriate factor
\item[(b)]
$\Omega G\ast \Omega H\simeq \bigvee\limits_{\substack{i\geqslant 0\\
j\geqslant 1}} \textit{ad}^j(H)(\textit{ad}^i(G)(G))$ where $\iota_2$
corresponds to
$\textit{ad}^j(\textit{ad}^i)$ on to the appropriate factor
\item[(c)]
$S\Omega G\simeq \bigvee\limits_{n\geqslant 0}\textit{ad}^n(G)(G)$ where the
composition
$S\Omega G\to S\Omega G\vee S\Omega G\to G\vee G$ corresponds to the
appropriate iterated Whitehead product on each factor.
\end{itemize}
\end{theorem}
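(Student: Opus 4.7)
The overall strategy is to mimic the classical Hilton--Milnor theorem, with the cofiber sequence $G\circ H\xrightarrow{\W} G\vee H\to G\times H$ supplied by Theorem~\ref{theor2} replacing the pinch map $SA\vee SB\to SA\times SB$.

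My first step is to construct the candidate splitting maps. The iterated Whitehead product $\textit{ad}^n\colon \textit{ad}^n(H)(G)\to G\vee H$ is null upon projection to $H$: by naturality of $\W$ applied to $G\to\ast$, the composite $G\circ H\xrightarrow{\W} G\vee H\to H$ equals $\W\colon \ast\circ H\to \ast\vee H=H$, which is null since $\ast\circ H=\ast$ by Theorem~\ref{theor1}(a). Induction upgrades this to all $n$, so each $\textit{ad}^n$ lifts to a map $\phi_n\colon \textit{ad}^n(H)(G)\to G\rtimes\Omega H$, and I set $\phi=\bigvee_n\phi_n$. Analogous iterated Whitehead products, lifted to the appropriate fibers, produce candidate maps $\psi$ for (b) and $\chi$ for (c).

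The second step is to show $\phi$ is an equivalence. Iterating Theorem~\ref{theor1}(b) gives $S(\textit{ad}^n(H)(G))\simeq G\wedge H^{\wedge n}$, so the source of $\phi$ carries the expected additive homology to match $\tilde H_*(G\rtimes\Omega H)\cong\tilde H_*(G)\otimes H_*(\Omega H)$, provided $H_*(\Omega H)$ is the expected tensor algebra. But this last fact is essentially part (c) after suspending, so (a) and (c) must be proved in tandem. I would carry out a joint induction (on cell-count, say), with base case the classical theorems of James and Hilton--Milnor for suspensions; the inductive step uses the cofiber sequence above together with the naturality established in Theorem~\ref{theor1}. A Whitehead-type argument then upgrades the homology equivalence to a homotopy equivalence since both sides are simply connected.

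For (b), I would apply (a) inside the second standard fibration $\Omega G\ast\Omega H\to G\vee H\to G\times H$: first split $G\rtimes\Omega H$ using (a), then observe that the further fiber $\Omega G\ast\Omega H\to G\rtimes\Omega H$ over the $G$-factor is built from iterated $G$-Whitehead products, so one re-applies (a) inside each wedge summand, producing the nested indices $(i,j)$. The identifications of $\iota_1$, $\iota_2$, and the pinch map $S\Omega G\to S\Omega G\vee S\Omega G\to G\vee G$ with iterated Whitehead products are then immediate from the construction of the $\phi_n$, $\psi$, and $\chi$ as lifts of explicit iterated brackets. The main obstacle is orchestrating the simultaneous induction between (a) and (c): the inductive hypothesis must carry both additive and multiplicative (tensor-algebra) information, and compatibility of the constructed lifts with the tensor-algebra structure on $H_*(\Omega H)$ must be tracked carefully through each induction step. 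This is the co-H analogue of the most delicate bookkeeping in the classical Hilton--Milnor proof, and I expect it to be where most of the technical work resides.
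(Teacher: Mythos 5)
Your overall shape (Hilton--Milnor with $\W\colon G\circ H\to G\vee H$ in place of the classical Whitehead product) is the right one, and your first step --- lifting the $\textit{ad}^n$ through $G\rtimes\Omega H$ --- matches what the paper does. But the heart of the theorem, namely that $\bigvee_n\textit{ad}^n(H)(G)\to G\rtimes\Omega H$ is a homology isomorphism, is exactly the part you defer to an unspecified ``joint induction on cell-count,'' and that is a genuine gap. Matching Poincar\'e series is not enough: you must show the images of the iterated Whitehead products actually \emph{generate} $H_*(\Omega(G\rtimes\Omega H))$, and a cell-by-cell induction is a poor vehicle here because skeleta of co-H spaces need not be co-H spaces and $\circ$ is only defined on $\Ccal\Ocal$, so the inductive hypothesis cannot even be formulated for the intermediate stages. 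The paper closes this gap with two ingredients you do not have: (i) an explicit computation (Propositions \ref{prop3.3} and \ref{lem3.4}, using concrete formulas for the equivalence $SX\wedge Y\simeq X*Y$ and for the fiber inclusion $\omega$, whose adjoint is the commutator of loops $\omega_1^{-1}\omega_2^{-1}\omega_1\omega_2$) showing that $\Omega(\textit{ad}^n)_*$ carries $\sigma^{-1}((\cdots(x\circ y_1)\cdots)\circ y_n)$ to the iterated graded commutator $[\cdots[[\sigma^{-1}x,\sigma^{-1}y_1],\sigma^{-1}y_2]\cdots]$; and (ii) Neisendorfer's purely algebraic theorem that the kernel of $L(G_*\oplus H_*)\to L(H_*)$ is the free Lie algebra on $\bigoplus_{n\geqslant 0}\textit{ad}^n(H_*)(G_*)$. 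Passing to universal enveloping algebras identifies $H_*$ of the loops on the fiber with the tensor algebra on these classes, which is what makes the comparison an isomorphism rather than a rank count. Your claim that the identification of the splitting maps with iterated brackets is ``immediate from the construction'' hides precisely step (i).

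The circularity you worry about between (a) and (c) is also not real, and resolving it by simultaneous induction is the wrong move. That $H_*(\Omega H)$ is the tensor algebra on $\sigma^{-1}\widetilde H_*(H)$ for a simply connected co-H space is a standing input stated in Section \ref{sec3}, not a consequence of part (c). The paper obtains (c) from (a) in one line via Corollary \ref{cor3.2}, $S\Omega G\simeq G\rtimes\Omega G$, which comes from the clutching construction (Proposition \ref{prop3.1}) applied to the path fibration; and (b) comes from (c) by expanding $\Omega G*\Omega H\simeq S\Omega G\wedge\Omega H\simeq\bigl[\bigvee_i\textit{ad}^i(G)(G)\bigr]\wedge\Omega H$ and applying (a) and Proposition \ref{prop2.5} summand by summand. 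Your proposal never invokes the clutching construction, which is the mechanism that makes (a) feed into (b) and (c) without any induction.
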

It should be pointed out that the equivalence (c) generalizes
the result of Theriault \cite[1.1]{T} where it is shown that
a simply connected co-associative co-H space
decomposes
\[
\Sigma\Omega G\simeq \bigvee_{n\geqslant 1}M_n
\]
for some spaces $M_n$, which are not further decomposed.
\begin{theorem}\label{theor4}
Suppose $X$ is finite dimensional and $f\!\colon SX\to G\vee H$
then~$f$ is the sum of the projections onto $G$ and $H$ and a
finite sum of iterated Whitehead products.
\end{theorem}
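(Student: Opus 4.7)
The plan is to peel off the projections of $f$ onto $G$ and $H$, lift the residual to the fiber of $G\vee H\to G\times H$, and invoke Theorem~\ref{theor3}(b) to recognize that fiber as a wedge of iterated Theriault products mapping to $G\vee H$ by iterated Whitehead products.

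Let $p_G\colon G\vee H\to G$ and $p_H\colon G\vee H\to H$ be the co-H retractions, and set $f_G=p_G\circ f$, $f_H=p_H\circ f$. Since $SX$ is a cogroup, $[SX,G\vee H]$ is a group; form $f_0=i_Gf_G+i_Hf_H$ in this group, where $i_G,i_H$ are the wedge inclusions. By construction $f$ and $f_0$ have the same image in $[SX,G\times H]$, so $f_0^{-1}f$ lies in the kernel of $[SX,G\vee H]\to[SX,G\times H]$ and lifts to a map $\tilde g\colon SX\to \Omega G\ast\Omega H$. By Theorem~\ref{theor3}(b),
\[
\Omega G\ast\Omega H\simeq \bigvee_{\substack{i\geqslant 0\\ j\geqslant 1}}\textit{ad}^j(H)(\textit{ad}^i(G)(G)),
\]
and composition with $\iota_2$ sends the $(i,j)$ summand to $G\vee H$ via the iterated Whitehead product $\textit{ad}^j(\textit{ad}^i)$. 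It therefore suffices to show that $\tilde g$ can be written as a finite sum in $[SX,\Omega G\ast\Omega H]$ of maps each factoring through a single wedge summand.

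Control the indexing by connectivity: since $G$ and $H$ are simply connected, induction on $i+j$ using $S(A\circ B)\simeq A\wedge B$ (Theorem~\ref{theor1}(b)) shows that $\textit{ad}^j(H)(\textit{ad}^i(G)(G))$ is at least $(i+j+1)$-connected. With $\dim SX$ finite, only finitely many indices $(i,j)$ admit nontrivial maps from $SX$, so $\tilde g$ factors through a finite subwedge $W_1\vee\cdots\vee W_N$. To split a map into such a finite wedge of simply connected co-H spaces, iterate the construction above: project onto each $W_k$ and lift the new residual into $\Omega W_k\ast\Omega(\bigvee_{\ell\neq k}W_\ell)$, which by Theorem~\ref{theor3}(b) is again a wedge of iterated Theriault products, now of the $W_\ell$'s. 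Pushed forward into $G\vee H$ via the Whitehead product maps $W_\ell\to G\vee H$, these become higher iterated Whitehead products of the original $G$ and $H$, since a Whitehead product of iterated Whitehead products is itself an iterated Whitehead product. The connectivity of the residual target strictly increases at each stage of this recursion, so for $\dim SX$ finite the process terminates in finitely many steps with a null residual.

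The main obstacle is the compatibility required in the recursion: one must verify that the Whitehead products appearing as fiber summands at each stage, once pushed into $G\vee H$ via the maps $W_k\to G\vee H$ that are themselves iterated Whitehead products, really produce iterated Whitehead products of the original $G$ and $H$ in the sense of the introduction, so that the telescoping of residuals assembles into a genuine finite sum of iterated Whitehead products accounting for $f-(i_Gf_G+i_Hf_H)$.
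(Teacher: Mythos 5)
Your argument is essentially correct, but it is organized quite differently from the paper's. The paper deduces Theorem~\ref{theor4} from Corollary~\ref{cor3.7}, which in turn rests on Theorem~\ref{theor3.6}: for $k=\dim X$ one decomposes $\Omega(G\vee H)$ as $\Omega\bigl(\bigvee_{\ell_\alpha>k}P_\alpha\bigr)\times\Omega\bigl(\prod_{\ell_\alpha\le k}P_\alpha\bigr)$, observes that the products of length $>k$ are at least $k$-connected so that $\Omega f$ restricted to $(\Omega X)^{k-1}$ compresses into the finite product factor, takes adjoints to get a finite sum of composites through the $P_\alpha$, and finally precomposes with the co-H structure map $X\to S[(\Omega X)^{k-1}]$; this yields the stronger Corollary~\ref{cor3.7} for any finite-dimensional co-associative co-H space, with Theorem~\ref{theor4} as the suspension case. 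You instead work throughout in the group $[SX,G\vee H]$: subtract the projections, lift the residual to the fiber $\Omega G\ast\Omega H$ via exactness, decompose the fiber by Corollary~\ref{cor3.5}(b), compress into a finite subwedge by connectivity, and then split maps into finite wedges by recursively peeling off summands and lifting residuals into joins again. This is a legitimate and more elementary route (it never needs the full product decomposition of Theorem~\ref{theor3.6}), but your recursion is in effect re-proving the special case of Theorem~\ref{theor3.6} that you need, and it is tied to $SX$ being a suspension, whereas the paper's version covers co-associative co-H sources. Two points deserve more care than you give them: the wedge-splitting step should peel off one summand at a time (the fiber of $\bigvee_k W_k\to\prod_k W_k$ is not a single join when $N>2$), using Proposition~\ref{prop2.5} to redistribute the resulting Theriault products over the individual $W_\ell$; and the ``compatibility'' issue you flag at the end --- that composites $P\circ Q\xrightarrow{\W}P\vee Q\xrightarrow{w_P\vee w_Q}G\vee H$ deserve to be called iterated Whitehead products of $G$ and $H$ --- is exactly the convention the paper adopts implicitly in constructing the maps $\omega_\alpha(k)$ in Theorem~\ref{theor3.6}, so it is definitional rather than a genuine obstacle, and you could simply declare it as such.
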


Throughout this work we will assume that all spaces
are of the homotopy type of a CW complex. All homology
and cohomology will be with a field of coefficients. We
will often show that a map between simply connected CW
complexes is a homotopy equivalence by showing
that it induces an isomorphism in homology with
an arbitrary field of coefficients,
without further comment.

Section~\ref{sec1} will be devoted to some general remarks
about telescopes and we will construct the Theriault
product in section~\ref{sec2}. Theorem~\ref{theor1} will follow from
\ref{prop2.3},
\ref{prop2.5} and \ref{prop2.7}. The functor in Theorem~\ref{theor2} is
defined after \ref{cor3.2}
and the equivalence follows from \ref{prop3.8}. The proof of the first part of 
Theorem~\ref{theor3}
occurs just prior to \ref{cor3.5} and ithe rest follows from \ref{cor3.5} 
Theorem~\ref{theor4} follows from \ref{cor3.7}.

\section{}\label{sec1}
In this section we will discuss some general
properties of telescopes of a self may $e\colon G\to G$ where
$G$ is a co-H space. We do not assume that $e$ is idempotent.
We will call $e$ a quasi-idempotent if the induced
homomorphism in homology satisfies the equation:
\[
(e_*)^2=-e_*
\]
where $u$ is a unit. We construct two telescopes:
\begin{align*}
T(e)&\colon G\xrightarrow{e}G\xrightarrow{e}G\xrightarrow{}\dots\\
T(1+e)&\colon G\xrightarrow{1+e}G\xrightarrow{1+e}G\xrightarrow{}\dots
\end{align*}
and a map:
\[
\Gamma\colon G\xrightarrow{} G\vee
G\xrightarrow{\Gamma_1\vee\Gamma_2}T(e)\vee T(1+e).
\]
\begin{proposition}\label{prop1.1}
If $G$ is simply connected and $e$ is a 
quasi-idempotent, $\Gamma$ is a homotopy equivalence.
Furthermore $H_*(T (e))= \im e_*$ and $\widetilde{H}_*(T(1+e))
\cong \ker e_*$.
\end{proposition}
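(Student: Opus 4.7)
The plan is to reduce the claim to a homological calculation. Since $G$ is a simply connected CW complex and both telescopes inherit simple connectivity from $G$, by the convention announced in the introduction it suffices to verify that $\Gamma$ induces an isomorphism on reduced homology with arbitrary field coefficients, and simultaneously to identify $H_*(T(e))$ and $\widetilde H_*(T(1+e))$.

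First I would use that singular homology commutes with sequential colimits to identify
\[
\widetilde H_*(T(f)) \cong \mathrm{colim}\bigl(\widetilde H_*(G) \xrightarrow{f_*} \widetilde H_*(G) \xrightarrow{f_*} \cdots\bigr)
\]
for $f=e$ and $f=1+e$. The quasi-idempotence relation makes a unit multiple of $e_*$ an honest idempotent on $\widetilde H_*(G)$, yielding a natural splitting
\[
\widetilde H_*(G)\cong \im(e_*)\oplus\ker(e_*).
\]
On this decomposition $e_*$ restricts to an isomorphism (multiplication by a unit) of the first summand and is zero on the second, while $1+e_*$ is zero on the first summand (this is where the specific form of the quasi-idempotence is used) and the identity on the second. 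The colimit computations are then transparent: $H_*(T(e))\cong \im(e_*)$ since the kernel piece is killed and $e_*$ is already an isomorphism on the image piece, and $\widetilde H_*(T(1+e))\cong \ker(e_*)$ since the image piece is killed while $1+e_*$ fixes the kernel piece.

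Next I would unwind $\Gamma$. The map $G\to G\vee G$ is the co-H comultiplication, which on reduced homology is the diagonal $x\mapsto (x,x)$ (forced by the co-H axiom that the two projections compose with the comultiplication to give the identity). Composing with $\Gamma_1\vee\Gamma_2$, the inclusions of the first stage of each telescope, and pushing through the identifications of the previous step, an element $x=a+b$ with $a\in\im(e_*)$ and $b\in\ker(e_*)$ is sent to $(\lambda a,b)\in \im(e_*)\oplus\ker(e_*)$ for a unit scalar $\lambda$. This is manifestly a bijection, so $\Gamma$ is an isomorphism on reduced homology with every field of coefficients and hence a homotopy equivalence.

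The main obstacle I expect is the bookkeeping in the last paragraph: the identification of $H_*(T(e))$ with $\im(e_*)$ and of $\widetilde H_*(T(1+e))$ with $\ker(e_*)$ is not entirely canonical, and one has to carefully reconcile how the stage-$0$ class of $x\in \widetilde H_*(G)$ projects onto each summand under iterates of $e_*$ and $1+e_*$ in the colimit. Once this is handled, the fact that the co-H diagonal respects the splitting is immediate and there is no further work.
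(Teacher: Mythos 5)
Your proof is correct and follows essentially the same route as the paper: both identify the homology of the telescopes as colimits and exploit the identity $(e_*)^2=-e_*$ (so that $-e_*$ is idempotent and $1+e_*$ is idempotent), concluding via the homology-isomorphism criterion for simply connected spaces. The only cosmetic difference is that you organize the verification around the explicit splitting $\widetilde H_*(G)\cong \im(e_*)\oplus\ker(e_*)$, whereas the paper checks injectivity and surjectivity of $\Gamma_*$ directly.
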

\begin{proof}
Suppose $\Gamma_*(\xi)=0$. Since $(\Gamma_1)_*(\xi)=0$ $(e_*)^k(\xi)=0$
for some $k$, so $e_*(\xi)=0$. Since $(\Gamma_2)_*(\xi)=0$,
$(1+e)_*{}^{k}(\xi)=0$.
But $(1+e)_*^2=(1+e)_*$, so $\xi=-e_*(\xi)=0$. Clearly $\Gamma_*$ is onto. Moreover,
$H_*(T(e))\cong \im e_*$ and $\widetilde{H}_*(T(1+e))\cong
\im(1+e_*)=\ker e_*$.
\end{proof}
\begin{corollary}\label{cor1.2}
Suppose $G$ is a simply connected co-H
space and $A\subset G$ is a retract of~$G$. Let $e$ be the
composition:
\[
G\xrightarrow{r}
A\xrightarrow{i}G.
\]
Then $T(e)\simeq A$, $T(1-e)\simeq G/A$, and the identity map
of $T(e)$ can be factored:
\[
T(e)\xrightarrow{\xi}
A\xrightarrow{i}
G\xrightarrow{r}
A\xrightarrow{\eta}T(e)
\]
where $\xi$ and $\eta$ are inverse homotopy equivalences.
\end{corollary}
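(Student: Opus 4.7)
The plan is to verify that $e=i\circ r$ is an honest homology idempotent, invoke the argument of Proposition~\ref{prop1.1} to produce a splitting $G\simeq T(e)\vee T(1-e)$, and then identify each factor directly.

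Since $r\circ i=1_A$, we have $e\circ e=ir\cdot ir=i(ri)r=ir=e$, so $(e_*)^2=e_*$. The proof of Proposition~\ref{prop1.1} goes through verbatim with $T(1+e)$ replaced by $T(1-e)$: the algebraic identity $(1-e)_*^2=(1-e)_*$ plays the role of $(1+e)_*^2=(1+e)_*$ in the original argument. This yields $G\simeq T(e)\vee T(1-e)$, with $H_*(T(e))\cong\im e_*$ and $\widetilde H_*(T(1-e))\cong\ker e_*$. Observe also that $A$, as a retract of the simply connected CW complex $G$, is itself simply connected.

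For $T(e)\simeq A$, I would construct $\eta\colon A\to T(e)$ as the composite of $i$ with the inclusion $G\hookrightarrow T(e)$ of the first stage of the telescope. Since $r\circ e=rir=r$, the constant sequence $r,r,r,\dots$ assembles compatibly along the telescope to define $\xi\colon T(e)\to A$. On the nose, $\xi\eta=r\circ i=1_A$. For $\eta\xi\simeq 1_{T(e)}$, both spaces are simply connected CW complexes, so by Whitehead it suffices to check that $(\eta\xi)_*$ is the identity on $H_*(T(e))\cong\im e_*$; but there $i_*r_*=e_*$ acts as the identity, since any idempotent restricts to the identity on its own image. The factorization claim is then immediate: the displayed composite is $\eta\circ r\circ i\circ\xi=\eta\circ 1_A\circ\xi=\eta\xi\simeq 1_{T(e)}$.

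For $T(1-e)\simeq G/A$, let $q\colon G\to G/A$ be the quotient map. Since $qi$ is nullhomotopic, $q\circ(1-e)\simeq q$, and so the constant sequence $q,q,q,\dots$ extends to a map $\overline q\colon T(1-e)\to G/A$. The retraction splits the cofiber sequence in homology to give $\widetilde H_*(G/A)\cong\ker e_*$, matching $\widetilde H_*(T(1-e))$; checking that $\overline q_*$ realizes this isomorphism and applying Whitehead completes the argument. The main subtlety throughout is the sign convention: Proposition~\ref{prop1.1} is stated with $T(1+e)$ under $(e_*)^2=-e_*$, while the corollary uses a true idempotent $e^2=e$ and therefore $T(1-e)$; once one notices that the proof of~\ref{prop1.1} is symmetric under this sign flip, the only remaining obstacle is the routine verification that $\overline q_*$ implements the comparison of kernels.
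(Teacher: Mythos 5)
Your argument is correct, and for the identification $T(e)\simeq A$ it is essentially the paper's: both produce maps $A\to T(e)$ and $T(e)\to A$ exhibiting $A$ as a retract of the telescope and conclude using simple connectivity and homology. You diverge on the other two points. For $T(1-e)\simeq G/A$ the paper does not map the telescope to $G/A$ at all; it observes that $1-e$ factors through the projection $\pi\colon G\to G/A$, uses this to factor the identity of $G$ up to homotopy as $G\to G\vee G\xrightarrow{r\vee\pi}A\vee G/A\to G$, and so splits $G\simeq A\vee G/A$, which is then matched against the splitting $G\simeq T(e)\vee T(1-e)$. Your route --- assembling the constant sequence of quotient maps into $\overline q\colon T(1-e)\to G/A$ via $q\circ(1-e)\simeq q-qir\simeq q$ (using left distributivity of composition over the co-H sum) --- is more direct and makes the comparison of $\ker e_*$ with $\widetilde H_*(G/A)$ explicit; the one small omission is that the homology Whitehead theorem here also needs $G/A$ simply connected, which the paper supplies via Van Kampen and you do not mention. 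For the factorization of the identity of $T(e)$, the paper replaces each space in the chain by a telescope with the three middle ones constant, whereas your direct computation $\eta\circ r\circ i\circ\xi=\eta\circ 1_A\circ\xi=\eta\xi\simeq 1_{T(e)}$ is simpler and equally valid. Finally, your explicit handling of the sign convention (using $T(1-e)$ for a genuine idempotent versus $T(1+e)$ for a quasi-idempotent in Proposition~\ref{prop1.1}) addresses a point the paper glosses over.
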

\begin{proof}
The telescope $T(e)$ and $A$ are both simply
connected and there are maps $T(e)\to A$ and $A\to T(e)$
making $A$ a retract of $T(\xi)$, and these maps are homotopy
equivalences. By the Van Kampen theorem $G/A\simeq G\cup CA$
is simply connected. Since $1-e\colon G\to G$ factors through
the projection $\pi\colon G\to G/A$, we can factor the identity
map up to homotopy
\[
G\to G\vee G\xrightarrow{r\vee\pi} A\vee G/A\to G
\]
and hence $G\simeq A\vee G/A$. The factorization of the 
identity map of $T(e)$ is obtained by replacing each
space by a telescope where the three in the 
center are constant.
\end{proof}
Now consider two maps $f_1,f_2\!\colon X\to X$.
\begin{proposition}\label{prop1.3}
$T(f_1f_2)\simeq T(f_2f_1)$.
\end{proposition}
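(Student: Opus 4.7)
The plan is to exhibit mutually inverse equivalences directly from the elementary identities
\begin{align*}
f_2 \circ (f_1 f_2) &= (f_2 f_1) \circ f_2,\\
f_1 \circ (f_2 f_1) &= (f_1 f_2) \circ f_1.
\end{align*}
The first identity provides the rungs of a commutative ladder whose top row is the telescope diagram $X \xrightarrow{f_1 f_2} X \xrightarrow{f_1 f_2} \cdots$ and whose bottom row is $X \xrightarrow{f_2 f_1} X \xrightarrow{f_2 f_1} \cdots$, with each vertical arrow equal to $f_2$; this induces a map of telescopes $\alpha\colon T(f_1 f_2) \to T(f_2 f_1)$. Symmetrically, the second identity yields $\beta\colon T(f_2 f_1) \to T(f_1 f_2)$ with vertical rungs $f_1$.

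Next I would compose. The self-map $\beta\alpha\colon T(f_1 f_2) \to T(f_1 f_2)$ is induced by the ladder whose vertical rungs are $f_1 \circ f_2 = f_1 f_2$; but this vertical map coincides with the horizontal bonding map of $T(f_1 f_2)$. A ladder of this form represents the one-step shift of the telescope into itself, which is a homotopy equivalence since the shifted sub-telescope is cofinal (equivalently, it is the identity on the underlying homotopy colimit). The same argument shows $\alpha\beta$ is the shift on $T(f_2 f_1)$, so $\alpha$ and $\beta$ are inverse homotopy equivalences.

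The only subtle point is the mapping-cylinder bookkeeping needed to recognize a ``ladder of bonding maps'' as the shift; this is a standard telescope fact and introduces no new obstacle beyond the two displayed identities. I do not expect any serious difficulty, as no simple-connectivity or co-H hypothesis is needed for this proposition.
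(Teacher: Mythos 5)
Your argument is correct and is essentially identical to the paper's: the paper writes down the same three-row ladder with vertical rungs $f_2$ and then $f_1$ and observes that the composite is the shift map, hence a homotopy equivalence. No changes needed.
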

\begin{proof}
We define maps between the telescopes:
\[
\xymatrix{
X\ar@{->}[d]_{f_2}\ar@{->}[r]^{f_1f_2}&X\ar@{->}[d]_{f_2}\ar@{->}[r]^{f_1f_2}&X\ar@{->}[d]_{f_2}\ar@{->}[r]&\cdots\\
X\ar@{->}[d]_{f_1}\ar@{->}[r]^{f_2f_1}&X\ar@{->}[d]_{f_1}\ar@{->}[r]^{f_2f_1}&X\ar@{->}[d]_{f_1}\ar@{->}[r]&\cdots\\
X\ar@{->}[r]^{f_1f_2}&X\ar@{->}[r]^{f_1f_2}&X\ar@{->}[r]&\cdots .
}
\]
The composition is the shift map which is a homotopy
equivalence.
\end{proof}
\section{}\label{sec2}
In this section we will consider 
a pair of co-H spaces in which
at least one is simply connected. Let $G$ and $H$ be two
such co-H spaces with their structure determined
by maps $\nu_1\colon G\to S\Omega G$ and $\nu_2\colon H\to S\Omega H$ each of
which is a right inverse to the respective
evaluation maps (See \cite{Ga}), which we label as $\epsilon_1,\epsilon_2$.
We define self maps of $S(\Omega G\wedge \Omega H)$ as follows:
\begin{align*}
&e_1\colon S(\Omega G\wedge \Omega H)\xrightarrow{\epsilon_1\wedge
1}G\wedge \Omega H\xrightarrow{\nu_1\wedge 1}S(\Omega G\wedge \Omega H)\\
&e_2\colon S(\Omega G\wedge \Omega H)\xrightarrow{1\wedge \epsilon_2}
\Omega G\wedge  H\xrightarrow{1\wedge \nu_2}S(\Omega G\wedge \Omega H);
\end{align*}
here we freely move the suspension coordinate to
wherever it is needed. Clearly $e_1$ and $e_2$ are
idempotents but $e_1e_2$ is not an idempotent;
however it is a quasi-idempotent. In fact by
swapping coordinates, one can see that $S(e_1e_2)$
is homotopic to the negative of the composition:
\[
\overline{e}\colon S^2(\Omega G\wedge \Omega H)\xrightarrow{\epsilon_1\wedge
\epsilon_2}G\wedge H\xrightarrow{\nu_1\wedge \nu_2}S^2(\Omega G\wedge \Omega H).
\]
Since $\overline{e}$ is clearly an idempotent $S(e_1e_2)\circ S(e_1e_2)\sim
\overline{e}^2=\overline{e}
\sim -S(e_1e_2)$, so $(e_1e_2)^2_*=-(e_1e_2)_*$.

Now assuming that one of $G,H$ is simply
connected, it follows that $\Omega G\wedge \Omega H$ is connected,
so $S(\Omega G\wedge \Omega H)$ is simply connected. Consequently
\begin{proposition}\label{prop2.1}
If one of $G$ and $H$ is simply
connected, there is a homotopy equivalence:
\[
S(\Omega G\wedge \Omega H)\simeq T(e_1e_2)\vee T(1+e_1e_2).
\]
\end{proposition}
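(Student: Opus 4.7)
The plan is to observe that this proposition is an immediate application of Proposition~\ref{prop1.1} to the self-map $e = e_1 e_2$ on the space $S(\Omega G \wedge \Omega H)$, since the two hypotheses of that proposition (simple connectivity of the ambient space, and quasi-idempotence of the self-map) have both been verified in the paragraphs just preceding the statement.

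First I would explicitly record the simple connectivity hypothesis: since $G$ or $H$ is simply connected, one of $\Omega G$ or $\Omega H$ is connected, so the smash product $\Omega G \wedge \Omega H$ is connected (the basepoint identifies everything in the axis of the non-connected factor), and hence its suspension $S(\Omega G \wedge \Omega H)$ is simply connected. Next I would cite the quasi-idempotence computation already carried out in the text: swapping the two suspension coordinates one identifies $S(e_1 e_2)$ with $-\overline{e}$, where $\overline{e} = (\nu_1 \wedge \nu_2)(\epsilon_1 \wedge \epsilon_2)$ is visibly a strict idempotent on $S^2(\Omega G \wedge \Omega H)$; therefore $S(e_1 e_2)^2 \simeq \overline{e}{}^2 = \overline{e} \simeq -S(e_1 e_2)$, which after passing to homology yields $(e_1 e_2)_*^2 = -(e_1 e_2)_*$, the defining relation for a quasi-idempotent with unit $u = -1$.

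With both hypotheses in hand I would apply Proposition~\ref{prop1.1} directly to $G := S(\Omega G \wedge \Omega H)$ and $e := e_1 e_2$. This produces the map
\[
\Gamma \colon S(\Omega G \wedge \Omega H) \to T(e_1 e_2) \vee T(1 + e_1 e_2)
\]
and asserts it is a homotopy equivalence, which is exactly the claim.

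There is essentially no obstacle; the only subtlety is the sign in the quasi-idempotence relation, which depends on identifying the homotopy class of $S(e_1 e_2)$ with the negative of a genuine idempotent after a coordinate swap, and this is handled before the statement. If I were writing this out I would be slightly careful to remark that the hypothesis in Proposition~\ref{prop1.1} as formulated does not restrict the unit $u$ (the argument there uses only that $e_*$ is killed by a polynomial $t^2 + ut$ with $u$ a unit, and the symmetric argument for $1 + e$), so applying it with $u = -1$ is legitimate.
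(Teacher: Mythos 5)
Your proposal is correct and follows exactly the paper's route: the paper likewise presents Proposition~\ref{prop2.1} as an immediate consequence of Proposition~\ref{prop1.1}, having just verified that $S(\Omega G\wedge\Omega H)$ is simply connected and that $e_1e_2$ is a quasi-idempotent via the coordinate-swap identification of $S(e_1e_2)$ with $-\overline{e}$. Your extra remark about the unit $u=-1$ is a reasonable clarification of the (slightly garbled) definition of quasi-idempotent in Section~\ref{sec1}, and does not change the argument.
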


Let 
\[
\theta \colon S(\Omega G\wedge\Omega H)\to T(e_1e_2)
\]
 be the
projection and 
\[
\psi\colon T(e_1e_2)\to S(\Omega G\wedge \Omega H)
\]
be the unique right inverse to $\theta$ which projects trivially
onto $T(1+e_1e_2)$. These maps determine a co-H
space structure on $T(e_1e_2)$.
\begin{definition}\label{def2.2}
$G\circ H=T(e_1e_2)$.
\end{definition}
\begin{proposition}\label{prop2.3}
Given co-H maps $f\colon G\to G'$ and
$g\colon H\to H'$, there is an induced co-H map
\[
f\circ g\colon G\circ H\to G'\circ H'
\]
making $G\circ H$ a functor of two variables. In
addition there are equivalences of co-H spaces.
\begin{itemize}
\item[(a)]
$SX\circ H\simeq X\wedge H$
\item[(b)]
$S(G\circ H)\simeq G\wedge H$
\item[(c)]
$S^1\circ H\simeq H$
\end{itemize}
and there is a homotopy equivalence $G\circ H\simeq H\circ G$.
\end{proposition}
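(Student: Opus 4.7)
\emph{Functoriality, part (b), and the commutativity.} If $f\colon G\to G'$ and $g\colon H\to H'$ are co-H maps, naturality of $\epsilon_i$ together with the co-H condition on $f$ and $g$ give homotopy-commutative squares between the structure maps of $(G,H)$ and $(G',H')$, so $S(\Omega f\wedge\Omega g)$ commutes with $e_1$ and $e_2$ up to homotopy and induces a map $T(e_1e_2)\to T(e_1'e_2')$ compatible with the splitting of Proposition~\ref{prop2.1}, hence a co-H map. For~(b), the identity $S(e_1e_2)\simeq-\bar e$ established before Definition~\ref{def2.2}, combined with the observation that negation on a suspension is a self-equivalence that can be interleaved stagewise, yields $T(S(e_1e_2))\simeq T(\bar e)$; since $\bar e=(\nu_1\wedge\nu_2)(\epsilon_1\wedge\epsilon_2)$ and $\epsilon_i\nu_i=1$, Proposition~\ref{prop1.3} gives $T(\bar e)\simeq T(1_{G\wedge H})\simeq G\wedge H$. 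For the commutativity, the swap $\tau\colon\Omega G\wedge\Omega H\to\Omega H\wedge\Omega G$ conjugates $e_1$ for the pair $(G,H)$ to $e_2$ for the pair $(H,G)$ and vice versa, so passing to telescopes and using Proposition~\ref{prop1.3} once more yields $G\circ H\simeq H\circ G$.

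\emph{Parts (a) and (c).} Take $G=SX$ and choose $\nu_1=Si$, where $i\colon X\to\Omega SX$ is the James inclusion, so that $\epsilon_1\nu_1=1$. Then $e_1=jr$ is a genuine retraction, with $j=Si\wedge 1\colon SX\wedge\Omega H\hookrightarrow S(\Omega SX\wedge\Omega H)$ and $r=\epsilon_1\wedge 1$ its inclusion and retraction. Writing $e_1e_2=j\cdot(re_2)$, Proposition~\ref{prop1.3} gives
\[
T(e_1e_2)\simeq T(re_2\cdot j),
\]
a telescope of a self-map of $SX\wedge\Omega H$. A direct trace through the canonical identification $SX\wedge\Omega H\cong X\wedge S\Omega H$ shows that $re_2j=1\wedge(\nu_2\epsilon_2)$, a genuine retraction of $X\wedge S\Omega H$ onto its subspace $X\wedge H$ embedded via $1\wedge\nu_2$. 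Corollary~\ref{cor1.2} then yields $T(re_2j)\simeq X\wedge H$, proving~(a). Part~(c) follows from (a) on taking $X=S^0$, so that $S^1=SS^0$ and $S^0\wedge H=H$.

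The main technical point is the identification $re_2j=1\wedge(\nu_2\epsilon_2)$ in part~(a), which requires tracing a point through the canonical identifications that move the suspension coordinate between the two smash factors of $S(\Omega G\wedge\Omega H)$. Once this is in hand, the telescope calculus of Section~\ref{sec1} delivers each equivalence, and naturality of the construction of $G\circ H$ through $\theta$ and $\psi$ ensures that the resulting maps in (a)--(c) are co-H equivalences.
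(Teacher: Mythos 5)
Your proof is correct and follows essentially the same route as the paper: functoriality via the compatibility of $S(\Omega f\wedge\Omega g)$ with $e_1$, $e_2$ and the splitting of Proposition~\ref{prop2.1}; part (a) by tracing the suspension coordinate to exhibit the relevant composite as the idempotent of the retract $X\wedge H$ and invoking the telescope calculus of Section~\ref{sec1}; part (b) from $S(e_1e_2)\simeq-\bar e$; part (c) as the case $X=S^0$; and commutativity from Proposition~\ref{prop1.3}. The only (immaterial) difference is that you reach the retract via $T(e_1e_2)\simeq T(re_2j)$ using Proposition~\ref{prop1.3}, whereas the paper factors $e_2e_1$ directly through $X\wedge H$ and applies Corollary~\ref{cor1.2}.
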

\begin{proof}
Since $f$ and $g$ are co-H maps, the squares
\[
\xymatrix{
G\ar@{->}[r]^{\nu_1}\ar@{->}[d]_f&S\Omega G\ar@{->}[d]_{S \Omega f}\\
G'\ar@{->}[r]^{\nu_1'}&S\Omega G'
}\qquad
\xymatrix{
H\ar@{->}[r]^{\nu_2}\ar@{->}[d]_g&S\Omega H\ar@{->}[d]_{S \Omega g}\\
H'\ar@{->}[r]^{\nu_2'}&S\Omega H'
}
\]
commute up to homotopy.  It follows that $f$ and $g$
induce maps that commute with $e_1$ and $e_2$ and
hence with the equivalences of~\ref{prop2.1}, $\theta$ and $\psi$.
For part a, observe that the composition $e_2e_1$
factors:
\[
\fontsize{8}{8}\selectfont
\xymatrix{
S(\Omega SX\wedge\Omega
H)\ar@{->}[r]^(0.525){\epsilon_1\wedge
1}&SX\wedge \Omega
H\ar@{->}[r]^(0.425){S\iota\wedge
1}\ar@{->}[dr]_{1\wedge G}& S\Omega
SX\wedge \Omega
H\ar@{->}[r]^{1\wedge \epsilon_2}&\Omega
SX\wedge
H\ar@{->}[r]^(0.4){1\wedge \nu_2}&S(\Omega
SX\wedge \Omega H)\\
&&X\wedge H\ar@{->}[ur]_{L\wedge 1}
}
\]
where $(1\wedge \epsilon)(\epsilon_1\wedge 1)$ is a right universe to
$(1\wedge \nu_2)(\iota\wedge 1)$. 
Thus we can apply~\ref{cor1.2} to see that $SX\circ H\simeq X\wedge H$ with
co-H structure given by the composite $(1\wedge \nu_2)(\iota\wedge 1)$. This
is precisely the co-H structure induced by~$\nu_2$.
Part~b follows since $S(G\circ H)$ is the telescope
of~$\overline{e}$ with co-H structure given by $\nu_1\wedge\nu_2$. Part~c
is a special case of part~a: The last statement
follows directly from~\ref{prop1.3}.

For the associativity
assertion in Theorem~\ref{theor1}, it will
be convenient to describe an alternative definition
of $G\circ H$. For this we assume that $G$ is a retract of a space
$SX$ and $H$ a retract of~$SY$:
\[
G\xrightarrow{\nu_1}SX\xrightarrow{\epsilon_1}G\qquad
H\xrightarrow{\nu_2}SY\xrightarrow{\epsilon_2}H.
\]
We can then replace the telescope in the definition
by the telescope of the composition:
\[
T\colon SX\wedge Y\xrightarrow{1\wedge\epsilon_2}X\wedge
H\xrightarrow{1\wedge \nu_2}SX\wedge
Y\xrightarrow{\epsilon_1\wedge1}G\wedge Y\xrightarrow{\nu_1\wedge
1}SX\wedge Y.
\]
The co-H structures defined by these maps are
equivalent to the structures defined by
\[
\widetilde{\nu_2}\colon
G\xrightarrow{\nu_1}SX\xrightarrow{S\widetilde{\epsilon}_1}S\Omega
X\qquad \widetilde{\nu}_1\colon
H\xrightarrow{\nu_2}SY\xrightarrow{S\Omega\widetilde{\epsilon}_2}S\Omega H
\]
and we have a homotopy commutative ladder:
\[
\fontsize{8}{8}\selectfont
\xymatrix{
SX\wedge Y\ar@{->}[r]^{1\wedge
\epsilon_2}\ar@{->}[d]^{S\widetilde{\epsilon}_1\wedge\widetilde{\epsilon}_2}&X\wedge
H\ar@{->}[r]^{1\wedge \nu_2}\ar@{->}[d]^{\widehat{\epsilon}_1\wedge 1}&SX\wedge
Y\ar@{->}[r]^{\epsilon_1\wedge
1}\ar@{->}[d]^{S\widetilde{\epsilon}_1\wedge\widetilde{\epsilon}_2}&G\wedge
Y\ar@{->}[r]^{\nu_2\wedge1}\ar@{->}[d]^{1\wedge\widetilde{\epsilon}_2}&SX\wedge
Y\ar@{->}[d]^{S\widetilde{\epsilon}_1\wedge\widetilde{\epsilon}_2}\\
S\Omega G\wedge \Omega H\ar@{->}[r]^{1\wedge \epsilon}&\Omega G\wedge
H\ar@{->}[r]^(0.45){1\wedge\widetilde{\nu}_2}&S\Omega G\wedge \Omega
H\ar@{->}[r]^{\epsilon\wedge 1}&G\wedge\Omega
H\ar@{->}[r]^(0.425){\widetilde{\nu}_1\wedge 1}&S\Omega G\wedge\Omega H .
}
\]
Hence we have a commutative diagram
\[
\xymatrix{
SX\wedge
Y\ar@{->}[r]\ar@{->}[d]_{S\widetilde{\epsilon}_1\wedge\widetilde{\epsilon}_2}&T\vee\overline{T}\ar@{->}[d]_{\alpha\vee
\beta}\\
S\Omega G\wedge \Omega H\ar@{->}[r]&G\circ H\vee \text{Tel}(1+e_1e_2) 
}
\]
where $\overline{T}$ is the telescope defined by $1+(\nu_1\wedge
1)(\epsilon_1\wedge 1)(1\wedge \nu_2)(1\wedge \epsilon_2)$.
The map $\alpha\colon T\to G\circ H$ is compatible with the homotopy
equivalence $ST\simeq G\wedge H\simeq S(G\circ H)$ so is itself a
homotopy equivalence. Now choose a right homotopy
inverse for the map $SX\wedge Y\to T$ which projects
trivially to $\overline{T}$. It's composite with
$S\widetilde{\epsilon}_1\wedge\widetilde{\epsilon}_2$ will
then project trivially to $T(1+e_1e_2)$. Hence we have
a homotopy commutative diagram:
\[
\xymatrix{
T\ar@{->}[r]\ar@{->}[d]_{\alpha}
&SX\wedge Y\ar@{->}[r]\ar@{->}[d]_{S\widetilde{\epsilon_1}\wedge
\widetilde{\epsilon}_2}
&T\ar@{->}[d]_{\alpha}\\
G\circ H\ar@{->}[r]&S\Omega G\wedge \Omega H\ar@{->}[r]&G\circ H
}
\]
and consequently the co-H structure on $T$ is compatible
under $\alpha$ with the co-H structure on $G\circ H$. We have
proven
\end{proof}
\begin{proposition}\label{prop2.4}
Suppose $G$ is represented as a retract of
$SX$ and $H$ a retract of $SY$. Then $G\circ H$ is homotopy
equivalent to the telescope $T$ of the composition.
\[
SX\wedge Y\to G\wedge Y\to SX\wedge Y\to X\wedge H\to SX\wedge
Y
\]
as co-H spaces, where the co-H structure on $T$ is
given by the equivalence $SX\wedge Y\simeq T\vee\overline{T}$.
\end{proposition}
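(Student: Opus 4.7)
The plan is essentially to transfer the construction of $G\circ H$ from the ambient $S\Omega G\wedge\Omega H$ setting to the ambient $SX\wedge Y$ setting, using the retract data to bridge the two. First I would build, in parallel with the 4-fold composition $f\colon SX\wedge Y\to SX\wedge Y$ whose telescope is $T$, the self-map $1+f$ and its telescope $\overline{T}$. An argument exactly like the one used for $e_1e_2$ shows that $f$ is a quasi-idempotent (after one suspension it is homotopic, up to sign, to the idempotent $(\nu_1\wedge\nu_2)(\epsilon_1\wedge\epsilon_2)$ on $S^2X\wedge Y$), so Proposition~\ref{prop1.1} yields the splitting $SX\wedge Y\simeq T\vee\overline{T}$ and identifies $T$ as a co-H space via its projection.

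Next I would write down the homotopy commutative ladder already recorded before the statement, whose vertical maps are $S\widetilde{\epsilon}_1\wedge\widetilde{\epsilon}_2$ (respectively $\widehat{\epsilon}_1\wedge 1$ and $1\wedge\widetilde{\epsilon}_2$ in the intermediate columns), and whose rows are the four stages of $f$ on top and the four stages of $e_1e_2$ on the bottom. Passing to telescopes this ladder gives a map $\alpha\vee\beta\colon T\vee\overline{T}\to (G\circ H)\vee T(1+e_1e_2)$. To see that $\alpha\colon T\to G\circ H$ is a homotopy equivalence I would suspend once: by Proposition~\ref{prop2.3}(b) we have $S(G\circ H)\simeq G\wedge H$, while the same computation with the retracts gives $ST\simeq G\wedge H$, and $S\alpha$ realizes the identity on this common $G\wedge H$. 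Both spaces being simply connected, $\alpha$ is itself an equivalence.

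Finally I would verify the co-H structures agree. The co-H structure on $T$ is encoded by a choice of right inverse to the projection $SX\wedge Y\to T$ that projects trivially to $\overline{T}$; the co-H structure on $G\circ H$ is encoded similarly by $\psi$. Composing the chosen right inverse with $S\widetilde{\epsilon}_1\wedge\widetilde{\epsilon}_2$ kills $T(1+e_1e_2)$ because $\beta$ takes $\overline{T}$ to $T(1+e_1e_2)$ by the ladder, so this composite is forced to equal $\psi\alpha$ up to homotopy. This produces the homotopy commutative square $T\rightleftarrows SX\wedge Y$ vs.\ $G\circ H\rightleftarrows S\Omega G\wedge\Omega H$ displayed in the text, which is exactly the statement that $\alpha$ is a co-H map.

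The main obstacle, and the only nontrivial point, is the last step: ensuring that the right inverse of $SX\wedge Y\to T$ can be chosen so that the induced co-H structure on $T$ pulled through $\alpha$ matches the one defined via $\psi$ on $G\circ H$. Everything else is bookkeeping with the retract data and with Propositions~\ref{prop1.1} and~\ref{prop2.1}; the content is that the splitting $SX\wedge Y\simeq T\vee\overline{T}$ is compatible under $S\widetilde{\epsilon}_1\wedge\widetilde{\epsilon}_2$ with the splitting $S\Omega G\wedge\Omega H\simeq (G\circ H)\vee T(1+e_1e_2)$, so a compatible right inverse exists.
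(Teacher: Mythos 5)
Your proposal follows essentially the same route as the paper: the homotopy commutative ladder over $S\widetilde{\epsilon}_1\wedge\widetilde{\epsilon}_2$ inducing $\alpha\vee\beta\colon T\vee\overline{T}\to (G\circ H)\vee T(1+e_1e_2)$, the suspension argument identifying $S\alpha$ with the equivalence $ST\simeq G\wedge H\simeq S(G\circ H)$, and the choice of a right inverse to $SX\wedge Y\to T$ projecting trivially to $\overline{T}$ so that its composite with $S\widetilde{\epsilon}_1\wedge\widetilde{\epsilon}_2$ kills $T(1+e_1e_2)$ and matches $\psi$. This is exactly the paper's argument, so the proposal is correct.
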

\begin{proposition}\label{prop2.5}
$(G_1\vee G_2)\circ H\simeq G_1\circ H\vee G_2\circ H$ as
co-H spaces.
\end{proposition}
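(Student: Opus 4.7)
The plan is to exploit the alternative description of $G\circ H$ given by Proposition~\ref{prop2.4}. Every co-H space is a retract of a suspension via its structure map $\nu\colon G\to S\Omega G$, so we may represent each $G_i$ as a retract of $SX_i$ with $X_i=\Omega G_i$, and $H$ as a retract of $SY$ with $Y=\Omega H$. Then $G_1\vee G_2$ is a retract of $SX_1\vee SX_2\simeq S(X_1\vee X_2)$, so we may apply Proposition~\ref{prop2.4} with $X=X_1\vee X_2$ to express $(G_1\vee G_2)\circ H$ as the telescope of the four-fold self-composition on $S(X_1\vee X_2)\wedge Y$.

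The key observation is the distributivity of smash over wedge, namely the natural identifications
\[
S(X_1\vee X_2)\wedge Y\cong SX_1\wedge Y\vee SX_2\wedge Y,\qquad (G_1\vee G_2)\wedge Y\cong G_1\wedge Y\vee G_2\wedge Y.
\]
Under these identifications the wedge structure maps $\epsilon_1\vee\epsilon_2$ and $\nu_1\vee\nu_2$ of $G_1\vee G_2$ split as wedges of their restrictions, while the maps $1\wedge\epsilon_2$ and $1\wedge\nu_2$ manifestly respect the wedge decomposition. Consequently the entire four-fold composite splits as $f_1\vee f_2$, where $f_i\colon SX_i\wedge Y\to SX_i\wedge Y$ is the self-composition that Proposition~\ref{prop2.4} produces for $G_i\circ H$.

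Since mapping telescopes commute with wedges (both are sequential homotopy colimits, and colimits commute with colimits), $T(f_1\vee f_2)\simeq T(f_1)\vee T(f_2)$, and the right-hand side is $G_1\circ H\vee G_2\circ H$ by Proposition~\ref{prop2.4}. To upgrade this to an equivalence of co-H spaces, one must check that the splitting $S(X_1\vee X_2)\wedge Y\simeq T\vee\overline{T}$ which defines the co-H structure on the left coincides with the wedge of the splittings $SX_i\wedge Y\simeq T(f_i)\vee\overline{T}(f_i)$ defining the co-H structures on the summands on the right. The main obstacle is this compatibility, but since the required right inverse to the projection onto the $T$-factor is uniquely determined by the requirement of projecting trivially to the complementary $\overline{T}$-factor (see the construction of $\psi$ preceding Definition~\ref{def2.2}), the wedge of the two individual sections is forced to agree with the combined section, and the equivalence is one of co-H spaces.
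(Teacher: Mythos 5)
Your proof is correct and follows essentially the same route as the paper: represent each $G_i$ as a retract of $SX_i$, note that $G_1\vee G_2$ is then a retract of $S(X_1\vee X_2)$, and observe via Proposition~\ref{prop2.4} that the defining telescope splits at each stage as the wedge of the telescopes for $G_1\circ H$ and $G_2\circ H$. Your additional verification that the co-H structures agree (via uniqueness of the section projecting trivially to the complementary factor) is a detail the paper leaves implicit, and it is a worthwhile one given that the statement asserts an equivalence of co-H spaces.
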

\begin{proof}
Write $G_i$ as a retract of $SX_i$, $i=1,2$. Then
$G_1\vee G_2$ is a retract of $S(X_1\vee X_2)$. Thus the telescope
for $(G_1\vee G_2)\circ H$ is at each point the wedge of the
telescopes for $G_1\circ H$ and $
G_2\circ H$.
\end{proof}

At this point we will apply \ref{prop2.4} to prove theorem~\ref{theor1} part e, the associativity formula.
We will make repeated use of
\begin{lemma}\label{lem2.6}
There is a homotopy commutative square
\[
\xymatrix{
S(G\circ H)\ar@{->}[d]_{\simeq}\ar@{->}[r]^{S\psi}&S^2X\wedge Y\ar@{->}[d]^{S\theta}\\
G\wedge H\ar@{->}[r]_{\simeq}\ar@{->}[ur]^{\nu_1\wedge\nu_2}&S(G\circ H).
}
\]
\end{lemma}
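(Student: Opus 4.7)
The plan is to exploit a fact already established during the construction of $G\circ H$: after one suspension the (non-idempotent) self-map $e_1e_2$ becomes homotopic to the negative of the genuine idempotent $\overline{e}=(\nu_1\wedge\nu_2)(\epsilon_1\wedge\epsilon_2)$. Working in the alternative model of Proposition~\ref{prop2.4}, where $G\circ H$ is realized as a telescope in $SX\wedge Y$, one similarly checks that $S$ applied to the four-term composition of that telescope factors through $G\wedge H$ as $(\nu_1\wedge\nu_2)(\epsilon_1\wedge\epsilon_2)$ up to sign, so this composite is a genuine idempotent on $S^2X\wedge Y$ whose image retracts onto $G\wedge H$ via $\epsilon_1\wedge\epsilon_2$.

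Next I would apply Corollary~\ref{cor1.2} to this idempotent $\overline{e}$. It yields inverse homotopy equivalences $\xi\colon T(\overline{e})\to G\wedge H$ and $\eta\colon G\wedge H\to T(\overline{e})$, together with a factorization of the identity of $T(\overline{e})$ as the sequence $\xi$, $\nu_1\wedge\nu_2$, $\epsilon_1\wedge\epsilon_2$, $\eta$. Under the equivalence $S(G\circ H)\simeq T(\overline{e})$, the suspension of the telescope inclusion $\psi$ becomes $\xi$ followed by $\nu_1\wedge\nu_2$, and the suspension of the retraction $\theta$ becomes $\epsilon_1\wedge\epsilon_2$ followed by $\eta$. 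The two vertical equivalences $S(G\circ H)\simeq G\wedge H$ in the lemma are $\xi$ (on the left) and $\eta$ (on the bottom).

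Both triangles of the square are then immediate. The upper triangle is the identification $S\psi=(\nu_1\wedge\nu_2)\circ\xi$ just recorded. The lower triangle asserts $S\theta\circ(\nu_1\wedge\nu_2)\simeq\eta$, which follows because $S\theta=\eta\circ(\epsilon_1\wedge\epsilon_2)$ and $(\epsilon_1\wedge\epsilon_2)(\nu_1\wedge\nu_2)$ is the identity of $G\wedge H$.

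The main obstacle is bookkeeping: tracking the sign relating $S(e_1e_2)$ and $\overline{e}$ (which is absorbed by a self-equivalence of each stage of the telescope and so does not affect the homotopy class of inclusion or retraction), moving suspension coordinates consistently between $SX\wedge Y$, $X\wedge SY$, and $S(X\wedge Y)$, and transporting $\psi$ and $\theta$ across the commutative ladder built in the proof of Proposition~\ref{prop2.4}. Once these identifications are pinned down, the commutativity of the square is formal.
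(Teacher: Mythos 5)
Your proposal is correct and follows essentially the same route as the paper, whose entire proof is the observation that $G\wedge H$ is a retract of $S^2X\wedge Y$ (via $\nu_1\wedge\nu_2$ and $\epsilon_1\wedge\epsilon_2$) so that Corollary~\ref{cor1.2} applies; the factorization of the identity of the telescope supplied by that corollary is exactly the one you use to read off both triangles. You have merely spelled out the bookkeeping (the sign relating $S(e_1e_2)$ to $\overline{e}$ and the transport of $\psi$, $\theta$ to the model of Proposition~\ref{prop2.4}) that the paper leaves implicit.
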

\begin{proof}
$G\wedge H$ is a retract of $S^2X\wedge Y$, so we may 
apply \ref{cor1.2}.
\end{proof}
\begin{proposition}\label{prop2.7}
$(G\circ H)\circ K\simeq G\circ (H\circ K)$.
\end{proposition}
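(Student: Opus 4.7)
The plan is to apply Proposition~\ref{prop2.4} twice and identify both sides as telescopes of the same idempotent on a common suspension. Choose retract representations $G\subset SX$, $H\subset SY$, $K\subset SZ$; any simply connected co-H space $G$ is such a retract with $SX = S\Omega G$, section $\nu_G$, and retraction $\epsilon_G$. Let $\chi_G = \nu_G\epsilon_G$, $\chi_H = \nu_H\epsilon_H$, and $\chi_K = \nu_K\epsilon_K$ denote the associated idempotents on $SX$, $SY$, $SZ$ respectively.

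The first step is to identify the four-fold composition appearing in Proposition~\ref{prop2.4}: under the natural identification $SX\wedge Y \cong X\wedge SY$ it reduces to $(1\wedge \chi_H)(\chi_G\wedge 1)$, a product of two commuting idempotents acting in different suspension coordinates of $S(X\wedge Y)$. This realizes $G\circ H$ as a retract of $S(X\wedge Y)$ and allows Proposition~\ref{prop2.4} to be applied a second time with inputs $G\circ H\subset S(X\wedge Y)$ and $K\subset SZ$. The outcome identifies $(G\circ H)\circ K$ with the telescope of an idempotent on $S(X\wedge Y\wedge Z)$ built from the three commuting idempotents $\chi_G,\chi_H,\chi_K$, each acting in its own factor.

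Running the same argument with the opposite grouping yields $G\circ (H\circ K)$ as the telescope of an idempotent on $S(X\wedge Y\wedge Z)$ built from the same three commuting idempotents. Under the canonical identifications $S(X\wedge Y)\wedge Z \cong SX\wedge (Y\wedge Z) \cong S(X\wedge Y\wedge Z)$ the two iterated idempotents coincide, so the two telescopes are homotopy equivalent.

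The main obstacle is the first identification step: tracking suspension coordinates carefully across the identifications $SA\wedge B\cong A\wedge SB$ to confirm that the four-fold composition of Proposition~\ref{prop2.4} really is a smash product of two factor idempotents, and then that iterating the construction produces an idempotent on $S(X\wedge Y\wedge Z)$ that is independent of the grouping. Beyond this bookkeeping, the argument is formal, with associativity of the smash product doing the remaining work; one could alternatively commute factors using Proposition~\ref{prop1.3} if a direct comparison of the idempotents is awkward.
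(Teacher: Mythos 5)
Your overall strategy --- use Proposition~\ref{prop2.4} to present both $(G\circ H)\circ K$ and $G\circ(H\circ K)$ as telescopes of self-maps of $S(X\wedge Y\wedge Z)$ --- is the same as the paper's. But the step you flag as ``bookkeeping'' is in fact the whole difficulty, and your resolution of it rests on a false premise. You describe the four-fold composition of Proposition~\ref{prop2.4} as a product of \emph{commuting idempotents} ``acting in different suspension coordinates.'' There is only one suspension coordinate in $SX\wedge Y\cong X\wedge SY$, and both $\chi_G$ and $\chi_H$ need it; the composite $(\chi_G\wedge 1)(1\wedge\chi_H)$ is obtained by shuttling that single coordinate between the two factors. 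The paper is explicit in Section~\ref{sec2} that the analogous composite $e_1e_2$ is \emph{not} an idempotent: after one suspension the coordinate swap shows $S(e_1e_2)\simeq -\overline{e}$, so $(e_1e_2)_*^2=-(e_1e_2)_*$ --- a quasi-idempotent, with a sign coming precisely from the swap you are suppressing. Consequently the claim that iterating the construction ``produces an idempotent on $S(X\wedge Y\wedge Z)$ that is independent of the grouping'' does not hold at the level of maps; the two groupings yield self-maps that are not visibly equal, nor obviously homotopic, and sign/coordinate issues compound with three factors. Note also that the retraction of $S(X\wedge Y)$ onto $G\circ H$ used in the second application of Proposition~\ref{prop2.4} is the telescope projection $\psi\theta$, which is only characterized up to homotopy by the splitting $S(X\wedge Y)\simeq T\vee\overline{T}$ --- it is not literally the composite of the factor maps, so ``the two iterated idempotents coincide'' is not something you can read off from associativity of the smash product.

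Even granting that the two self-maps induce the same image in homology (up to sign), that alone does not give a homotopy equivalence of the telescopes: for simply connected complexes you need an actual map inducing the isomorphism. This is exactly what the paper supplies and you do not. The paper's proof constructs the explicit comparison map $\theta'\psi\colon (G\circ H)\circ K\to G\circ(H\circ K)$ from the two retract presentations on $S(X\wedge Y\wedge Z)$, and then verifies it is an equivalence by suspending twice and using Lemma~\ref{lem2.6} (i.e., the identification $S(G\circ H)\simeq G\wedge H$) to reduce the check to a homotopy commutative diagram through $G\wedge H\wedge K$, where everything is computable. To repair your argument you would either need to prove that the two iterated quasi-idempotents are genuinely homotopic (tracking all coordinate swaps and signs, and replacing the telescope projections by the actual factor composites via Proposition~\ref{prop1.3}), or, more practically, follow the paper and build a comparison map whose effect can be checked after suspension.
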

\begin{proof}
We suppose that $G$, $H$, and $K$ are presented
by retractions
\begin{gather*}
G\xrightarrow{\psi_1}SX\xrightarrow{\theta_1}G\\
H\xrightarrow{\psi_2}SY\xrightarrow{\theta_2}H\\
K\xrightarrow{\psi_3}SZ\xrightarrow{\theta_3}K
\end{gather*}
and we then construct retractions for $G\circ H$ and $H\circ K$:
\end{proof}
\begin{gather*}
G\circ H\xrightarrow{\psi_3}SX\wedge Y\xrightarrow{\theta_3}G\circ H\\
H\circ K\xrightarrow{\psi_4}SY\wedge Z\xrightarrow{\theta_4}H\circ K.
\end{gather*}
Using these we construct retractions for $G\circ(H\circ K)$ and $(G\circ
H)\circ K$
\begin{gather*}
(G\circ H)\circ K\xrightarrow{\psi} S(X\wedge Y\wedge
Z)\xrightarrow{\theta}(G\circ H)\circ K\\    
G\circ (H\circ K)\xrightarrow{\psi'} S(X\wedge Y\wedge
Z)\xrightarrow{\theta'}(G\circ (H\circ K)).
\end{gather*}
We will show that $\theta'\psi\colon (G\circ H)\circ K\to G\circ (H\circ
K)$ is a
homotopy equivalence. By \ref{lem2.6}, we have homotopy
commutative diagrams:
\[
\xymatrix@C=0.5em{
&(G\!{}\circ{}\! H) \wedge K
\ar@{->}[dr]^{\psi_4\wedge\psi_3}
&&
\ar@{->}[dl]_{\psi_1\wedge\psi_5}G\wedge(H\!{}\circ{}\! K)\ar@{->}[dr]^{\simeq}&\\
S((G\!{}\circ{}\! K)\!{}\circ{}\! K)\ar@{->}[ur]^{\simeq}\ar@{->}[rr]^{S\psi}
&&S^2X\!{}\wedge{}\!Y\!{}\wedge{}\!Z\ar@{->}[rr]^{S\theta'}&
&S(G\!{}\circ{}\! (H\!{}\circ{}\! K)) .
}
\]
Suspending and applying \ref{lem2.6} again we obtain a
homotopy commutative diagram:
\[
\xymatrix{
&\ar@{->}[dl]_{\simeq}G\wedge H\wedge K\ar@{->}[dd]_{\psi_1\wedge
\psi_2\wedge \psi_3}\ar@{->}[dr]^{\simeq}&\\
S(G\circ H)\wedge K\ar@{->}[dr]_{S(\psi_4\wedge\psi_3)}&&SG\wedge (H\circ K)\ar@{->}[dl]^{S(\psi_1\wedge \psi_5)}\\
&S^3(X\wedge Y\wedge Z)&
}
\]
from these diagrams it follows that $S^2(\theta'\psi)$ is
a homotopy equivalence and hence $\theta'\psi$ is as well.

\section{}\label{sec3}
In this section we generalize the clutching construction
\cite[Proposition~1]{G} for fibrations over a suspension to fibrations over
a
co-H space. This allows for the decomposition results in 
theorems~\ref{theor2} and~\ref{theor3}.
\begin{proposition}\label{prop3.1}
Suppose $F\to E\to G$ is a fibration where $G$ is
a co-H space. Then $E/F\simeq G\rtimes F$.
\end{proposition}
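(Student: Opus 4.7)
The plan is to reduce the claim to the classical case where the base is a suspension, which is the content of \cite[Proposition~1]{G}. The co-H structure on $G$ is a map $\nu\colon G\to S\Omega G$ that is a right homotopy inverse to the evaluation $\epsilon\colon S\Omega G\to G$, exhibiting $G$ as a retract of $SX$ for $X=\Omega G$ via the idempotent $\pi=\nu\epsilon\colon SX\to SX$.

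First I would pull the fibration $F\to E\to G$ back along $\epsilon$ to obtain a fibration $F\to\tilde E\to SX$. Since the new base is a suspension, \cite[Proposition~1]{G} supplies a homotopy equivalence $\phi\colon\tilde E/F\xrightarrow{\simeq} SX\rtimes F$. By functoriality of pullback, $\pi$ lifts to a self-map $\tilde\pi\colon\tilde E\to\tilde E$ covering $\pi$; this $\tilde\pi$ is an idempotent up to homotopy, preserves the fiber $F$ since $\pi$ is pointed, and so descends to an idempotent $\bar\pi\colon\tilde E/F\to\tilde E/F$. On the other side, the idempotent $\pi\rtimes 1\colon SX\rtimes F\to SX\rtimes F$ factors through $G\rtimes F$, displaying the latter as a retract of $SX\rtimes F$. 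By the same pullback-along-$\nu$ argument that recovers $E\simeq\nu^*\tilde E$, the retract of $\tilde E/F$ cut out by $\bar\pi$ is $E/F$. By the standard argument that the telescope of a split idempotent recovers its image (cf.\ the first part of the proof of \ref{cor1.2}), we obtain $E/F\simeq T(\bar\pi)$ and $G\rtimes F\simeq T(\pi\rtimes 1)$.

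The key remaining step is to verify the homotopy commutative square
\[
\xymatrix{
\tilde E/F\ar[r]^{\bar\pi}\ar[d]_{\phi}&\tilde E/F\ar[d]^{\phi}\\
SX\rtimes F\ar[r]^{\pi\rtimes 1}&SX\rtimes F.
}
\]
Granted this square, $\phi$ induces a homotopy equivalence of telescopes $T(\bar\pi)\xrightarrow{\simeq}T(\pi\rtimes 1)$, and hence $E/F\simeq G\rtimes F$ as claimed.

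The hard part will be establishing this naturality square. It should follow from direct inspection of the construction of $\phi$ in \cite[Proposition~1]{G}, where $\phi$ is built by clutching the two cones of $SX$ against the fiber $F$ using trivializations of the fibration on each cone; this construction is manifestly natural with respect to any self-map of $SX$ together with a bundle lift, and applied to $(\pi,\tilde\pi)$ yields precisely the required compatibility. Once this naturality is secured, the rest of the proof is purely formal bookkeeping with the telescope formalism of Section~\ref{sec1}.
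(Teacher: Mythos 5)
Your strategy---pull back along $\epsilon$, exhibit $E/F$ and $G\rtimes F$ as the retracts of $\tilde E/F$ and $SX\rtimes F$ cut out by the idempotents $\bar\pi$ and $\pi\rtimes 1$, and transport one idempotent to the other through the clutching equivalence $\phi$---is a genuinely different route from the paper's, and every step is sound except the one you yourself defer as ``the hard part.'' Unfortunately that step is not bookkeeping: it is the entire content of the proposition, and the naturality you invoke is false in the generality in which you state it. The equivalence $\tilde E/F\simeq SX\rtimes F$ of \cite[Proposition~1]{G} is built from the cone decomposition of $SX$ and a choice of trivializations over the two cones. It is not natural ``with respect to any self-map of $SX$ together with a bundle lift'': already for the trivial fibration $SX\times F$ and $f=1$, two different bundle lifts of the identity that agree on the fiber over the basepoint generally induce homologically different self-maps of $E/F=SX\rtimes F$, while your proposed bottom map $f\rtimes(\tilde f|_F)=1\rtimes 1$ is the same for both; so the square cannot commute for all lifts. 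You must therefore exploit the specific form of the canonical pullback map $\tilde\pi(x,e)=(\nu\epsilon(x),e)$. But $\pi=\nu\epsilon$ is precisely the map that is \emph{not} a suspension and does not respect the cone decomposition of $S\Omega G$---its only structure is the co-H structure of $G$---so verifying your square for $(\pi,\tilde\pi)$ is essentially equivalent to proving the proposition. Nor can you factor the square through the two maps covering $\epsilon$ and $\nu$ separately, since the clutching equivalence is not available over the non-suspension base $G$.

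For contrast, the paper's proof is engineered to avoid any naturality statement about $\phi$. It uses the idempotent only to split the base, $S\Omega G\simeq G\vee G'$, and then builds a second model of $E'=\epsilon^*E$ over this wedge: the quasifibration $E\cup_F(G'\times F)\to G\vee G'$, verified via Dold--Thom \cite[2.10]{DT58}. Comparing the two quotients $E'/F\simeq G\rtimes F\vee G'\rtimes F$ and $\bigl(E\cup_F(G'\times F)\bigr)/F\simeq E/F\vee G'\rtimes F$ by a map over $G\vee G'$ then yields $E/F\simeq G\rtimes F$ without ever chasing the idempotent through the clutching construction. To salvage your telescope argument you would need an actual proof of your commutative square for $\tilde\pi$, and I do not see how to produce one without reproducing something like this wedge/quasifibration comparison; the telescope formalism of Section~\ref{sec1} (e.g.\ \ref{cor1.2}, \ref{prop1.3}) is fine, but it is carrying none of the real weight here.
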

\begin{proof}
In the case $G=SX$, we have by \cite[Proposition 1]{G}
\[
E\simeq F\cup_{\theta}(CX) \times F.
\]
So $E/F\simeq SX\rtimes F$. It is easy to construct a map $G\rtimes F\to
E/F$
in general. Consider the sequence of pull backs:
\[
\xymatrix{
F\ar@{=}[r]\ar@{->}[d]&F\ar@{=}[r]\ar@{->}[d]&F\ar@{->}[d]\\ 
E\ar@{->}[d]\ar@{->}[r] &E'\ar@{->}[r]\ar@{->}[d] &E\ar@{->}[d]\\
G\ar@{->}[r]^{\nu}&S\Omega G\ar@{->}[r]^{\epsilon}&G . 
}
\]
Then we consider the composite:
\[
E/F\to E'/F\simeq S\Omega G\rtimes F\to G\rtimes F
\]
where the middle equivalence follows since the base is a
suspension. Showing that the composite is a homotopy
equivalence will take some work.

Since $\nu\epsilon\colon S\Omega G\to S\Omega G$ is an idempotent, we can
decompose $S\Omega G$:
\[
S\Omega G\simeq G\vee G' .
\]
We now observe that we can construct a quasifibration model for
a fibration over a one point union. Suppose we have such a fibration
\[
\xymatrix{
E_A\ar@{->}[r]\ar@{->}[d]&\widetilde{E}\ar@{->}[d]&\ar@{->}[l]E_B\ar@{->}[d]\\ 
A\ar@{->}[r]&A\vee B&\ar@{->}[l]B
}
\]
with pull backs $E_A$ and $E_B$ and fiber $F$. Then we can construct
\[
E_A\cup_F E_B
\]
the union of $E_A$ and $E_B$ with the subspace $F$ identified. Then
\[
\xymatrix{
E_A\cup_FE_B\ar@{->}[r]^{\phi}\ar@{->}[d]&\widetilde{E}\ar@{->}[d]\\ 
A\vee B\ar@{=}[r]&A\vee B 
}
\]
$\phi$ is a homotopy equivalence. In our case $S\Omega G\simeq G\vee G'$
and
$E_G=E$, $E_{G'}=G'\times F$, so
\[
\xymatrix{
E'\simeq E\cup_F G'\times F\ar@{->}[d]\\
G\vee G' 
}
\]
is a quasifibering by \cite[2.10]{DT58}.
On the other hand $E'/F\simeq S\Omega G\rtimes F\simeq G\rtimes F\vee G'
\rtimes F$
while $E\cup_FG'\rtimes F\simeq E/F\vee G'\times F$. Since the map
between
$E'$ and $E\cup_FG'\rtimes F$ is a map over $G\vee G'$ we see that
$E/F\simeq G\times F$.
\end{proof}
\begin{corollary}\label{cor3.2}
$S\Omega G\simeq G\rtimes \Omega G$.
\end{corollary}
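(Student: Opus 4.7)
The plan is to apply Proposition~\ref{prop3.1} to the path-loop fibration
\[
\Omega G \to PG \to G,
\]
which is the canonical fibration over $G$ with fiber $F=\Omega G$ and total space the contractible space $PG$. Since Proposition~\ref{prop3.1} applies to any fibration over a co-H space $G$, and $G$ is assumed here to be a (simply connected) co-H space, we get directly
\[
PG/\Omega G \simeq G\rtimes\Omega G.
\]

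The second ingredient is the standard identification $PG/\Omega G\simeq S\Omega G$. This comes from contractibility of $PG$: the quotient of a contractible space by a subspace $A\subseteq PG$ is homotopy equivalent to the unreduced suspension of $A$, and in our setting this gives the reduced suspension $S\Omega G$ of the based loop space. Combining the two equivalences yields
\[
S\Omega G \simeq PG/\Omega G \simeq G\rtimes\Omega G,
\]
which is the claim.

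There is essentially no obstacle: the only thing to verify is the bookkeeping that the homotopy equivalence produced by Proposition~\ref{prop3.1} matches the conventional half-smash model, but that is built into the statement of \ref{prop3.1}. Since $G$ is a co-H space (so $\nu\colon G\to S\Omega G$ exists and $\epsilon\nu\simeq 1$), the hypotheses of \ref{prop3.1} are met and no further argument is needed.
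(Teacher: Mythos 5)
Your proof is correct and is exactly the paper's argument: the paper's entire proof reads ``Apply \ref{prop3.1} to the path space fibration over~$G$,'' and you have simply spelled out the implicit step that contractibility of $PG$ identifies $PG/\Omega G$ with $S\Omega G$. No differences to report.
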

\begin{proof}
Apply \ref{prop3.1} to the path space fibration over~$G$.
\end{proof}
\begin{proof}[Proof of theorem~\ref{theor2}] Construction: 
We now describe our generalization of the Whitehead
product. Suppose $G$ and $H$ are co-H spaces and one of
them is simply connected. The Whitehead product:
\[
\W\colon G\circ H\to G\vee H .
\]
is then defined as the composition:
\[
G\circ H\xrightarrow{\psi}S\Omega G\wedge \Omega H\simeq \Omega G*\Omega H\xrightarrow{\omega}G\vee H
\]
where $\omega$ is the inclusion of the fiber in the fibration sequence:
\[
\Omega G*\Omega H\xrightarrow{\omega} G\vee H \to G\times H .
\]
\end{proof}
Clearly $\psi$ and $\omega$ are natural transformations, so $\W$ is
as well. 

Before we prove the homotopy equivalence in theorem~\ref{theor2},
we need to establish some results in theorem~\ref{theor3}. We begin by
constructing maps:
\[
\textit{ad}^n\colon\textit{ad}^n(H)(G)\to G\vee H
\]
inductively. For $n=0$ this is just the inclusion of~$G$ in
$G\vee H$. For $n>0$ we define $\textit{ad}^n$ as the composition:
\begin{multline*}
\textit{ad}^n(H)(G)=\left(\textit{ad}^{n-1}(H)(G)\right)\circ H\\
\to (\textit{ad}^{n-1}(H)(G))\vee H
\to (G\vee H)\vee H=G\vee H .
\end{multline*}
Next we calculate the effect of
$\textit{ad}^n$ in loop space homology:
\[
\Omega(\textit{ad}^n)_*\colon H_*(\Omega(\textit{ad}^n(H)(G)))\to H_*(\Omega(G\vee H))
\]
To do this we need some notation. For each co-H space
$G$, write $\sigma^{-1}\colon\widetilde{H}_r(G)\to \widetilde{H}_{r-1}(\Omega G)$ for the composition:
\[
\widetilde{H}_r(G)\xrightarrow{\nu_+}\widetilde{H}_r(S\Omega
G)\simeq\widetilde{H}_{r-1}(\Omega G) .
\]
Let $\{x_i\}$ be a basis for $\widetilde{H}_*(G)$. Then $H_*(\Omega G)$ is
the tensor
algebra on the classes $\{\sigma^{-1}(x_i)\}$. Given two classes
$x\in \widetilde{H}_r(G)$, $y\in \widehat{H}_s(H)$ we will write
\[
x\circ y\in \widetilde{H}_{r+s-1}(G\circ H)
\]
for the class that corresponds to $x\wedgebar  y\in \widetilde{H}_{r+s}(G\wedge H)$ under
the isomorphism:
\[
\widetilde{H}_{r-1}(G\circ H)\cong \widetilde{H}_r(S(G\circ H))\simeq
\widetilde{H}_r(G\wedge H) .
\]
Then the classes $\{x_i\circ y_j\}$ form a basis for $\widetilde{H}_*(G\circ H)$
where $\{x_i\}$ and $\{y_i\}$ respectively are bases for $\widetilde{H}_*(G)$
and $\widetilde{H}_*(H)$.
\begin{proposition}\label{prop3.3}
$(\Omega W)_*(\sigma^{-1}(x\circ
y))=\pm\left[\sigma^{-1}x,\sigma^{-1}y\right]$
where
\[
(\Omega W)_*\colon H_*(\Omega(G\circ H))\to H_*(\Omega(G\vee H)).
\]
\end{proposition}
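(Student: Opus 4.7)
The strategy is to reduce the identity to the case where $G$ and $H$ are themselves suspensions, and there invoke the classical Samelson--Whitehead formula.

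First, using Proposition~\ref{prop2.4}, I would represent $G$ as a retract of $SX$ (for instance $X=\Omega G$) and $H$ as a retract of $SY$ by co-H maps. The induced retractions $r\colon SX\vee SY\to G\vee H$ and $r'\colon S(X\wedge Y)\to G\circ H$ (the latter via functoriality of~$\circ$) fit with the Whitehead transformation from Proposition~\ref{prop2.3} and Theorem~\ref{theor2} into a homotopy commutative square
\[
\xymatrix{
S(X\wedge Y) \ar@{->}[r]^{\W_0} \ar@{->}[d]_{r'} & SX\vee SY \ar@{->}[d]^{r} \\
G\circ H \ar@{->}[r]_{\W} & G\vee H
}
\]
where $\W_0$ is the classical Whitehead product on the suspensions. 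Lift $x\in \widetilde H_*(G)$ and $y\in \widetilde H_*(H)$ to classes $\widetilde x\in \widetilde H_*(SX)$, $\widetilde y\in \widetilde H_*(SY)$ with $r_*(\widetilde x)=x$, $r_*(\widetilde y)=y$, so that $r'_*(\widetilde x\circ \widetilde y)=x\circ y$.

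Naturality of $\sigma^{-1}$ with respect to co-H maps yields $(\Omega r')_*\sigma^{-1}(\widetilde x\circ \widetilde y)=\sigma^{-1}(x\circ y)$; and since $\Omega r$ is automatically an $H$-map, $(\Omega r)_*$ is a Pontryagin algebra homomorphism and so preserves graded commutators, giving $(\Omega r)_*[\sigma^{-1}\widetilde x,\sigma^{-1}\widetilde y]=[\sigma^{-1}x,\sigma^{-1}y]$. Chasing the square therefore reduces the proposition to the identity
\[
(\Omega \W_0)_*\sigma^{-1}(\widetilde x\circ \widetilde y)=\pm[\sigma^{-1}\widetilde x,\sigma^{-1}\widetilde y]
\]
in the suspension case.

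For this classical case I would unwind $\W_0$ as the composite $S(X\wedge Y)\xrightarrow{\psi}S\Omega SX\wedge \Omega SY\simeq \Omega SX\ast\Omega SY\xrightarrow{\omega}SX\vee SY$: the first map carries the bottom-cell class $\widetilde x\wedgebar\widetilde y$ to the join $\sigma^{-1}\widetilde x\ast\sigma^{-1}\widetilde y$, and $\omega$ carries this join to the graded commutator of James generators in $H_*(\Omega(SX\vee SY))=T(\widetilde H_*(X)\oplus \widetilde H_*(Y))$ by the classical Samelson product theorem (equivalently, the fact that the Whitehead product on homotopy realizes the Samelson bracket on $H_*\Omega$). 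The sign arises from the coordinate swap used in the equivalence $S\Omega SX\wedge \Omega SY\simeq \Omega SX\ast\Omega SY$, exactly in the spirit of the sign analysis preceding Proposition~\ref{prop2.1}. The main obstacle is precisely this final suspension case; the reduction to it is routine naturality together with the observation that looping any map yields an $H$-map in Pontryagin homology.
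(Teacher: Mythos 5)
Your proof is essentially correct, but it takes a genuinely different route from the paper. The paper does not reduce to the suspension case at all: it uses Lemma~\ref{lem2.6} to compute $\psi_*(x\circ y)$ directly in $H_*(S\Omega G\wedge\Omega H)$ for arbitrary co-H spaces $G$ and $H$, which reduces the proposition to evaluating the composite $\Omega G\wedge\Omega H\to\Omega S(\Omega G\wedge\Omega H)\xrightarrow{\Omega\xi}\Omega(\Omega G*\Omega H)\to\Omega(G\vee H)$; this is then done in Lemma~\ref{lem3.4} by writing out explicit six-part formulas for $\omega\circ\xi$ and observing that the adjoint sends $(\omega_1,\omega_2)$ to the loop commutator $\omega_1^{-1}\omega_2^{-1}\omega_1\omega_2$, whose effect on primitives is the graded commutator. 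In other words, the ``classical Samelson--Whitehead formula'' you cite as a black box is exactly what the paper proves by hand, and it proves it in the general setting so that no naturality reduction is needed. Your reduction is legitimate but buys less than it appears to: you still need to know what $\W_0$ does on the bottom cell, i.e.\ the same computation, only now performed for suspensions. It also carries one unaddressed obligation: for the square relating $\W_0$ to $\W$ to commute by naturality, and for $r'=\epsilon_1\circ\epsilon_2$ to exist by functoriality of the Theriault product, you must check that the retractions $\epsilon_1\colon SX\to G$ and $\epsilon_2\colon SY\to H$ are co-H maps for the structures determined by $\nu_1,\nu_2$ (true for the evaluation maps, via the splitting $S\Omega G\simeq G\vee G'$, but it deserves a sentence). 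What your approach buys is modularity --- the general statement follows formally from the suspension case once naturality is in place; what the paper's approach buys is self-containment and the stronger Lemma~\ref{lem3.4}, which is stated and used for arbitrary $G$, $H$.
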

\begin{proof}
By lemma~\ref{lem2.6}
\[
\psi_*(x\circ y)=e\wedgebar\sigma^{-1}(x)\wedgebar \sigma^{-1}(y)\in H_*(S\Omega G\wedge \Omega H)
\]
so
\[
(\Omega\psi)_*(\sigma^{-1}(x\circ y))=\sigma^{-1}(x)\wedgebar \sigma^{-1}(y)\in H_*(\Omega G\wedge \Omega H)
\]
regarded as a submodule of $H_*(S\Omega G\wedge \Omega H)$. It now
suffices to evaluate the composition:
\[
\Omega G\wedge \Omega H\to\Omega(S\Omega G\wedge \Omega
H)\xrightarrow{\Omega\xi}\Omega(\Omega G*\Omega H)\to \Omega (G\vee
H) 
\]
where $\xi$ is the standard homotopy equivalence $SX\wedge Y\simeq X*Y$.
\end{proof}
\begin{lemma}\label{lem3.4}
The composition:
\[
\Omega G\wedge \Omega H\to \Omega S(\Omega G\wedge \Omega H)\xrightarrow{\Omega\xi}\Omega(\Omega G *\Omega H)\to \Omega(G\vee H)
\]
carries $\sigma^{-1}(x)\wedgebar \sigma^{-1}(y)\in H_*(\Omega G\wedge
\Omega H)$ to $\pm \left[\sigma^{-1}(x),\sigma^{-1}(y)\right]$.
\end{lemma}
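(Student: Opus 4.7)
The plan is to recognize the displayed composition as the Samelson product of the loops of the natural inclusions of $G$ and $H$ into $G\vee H$, and then to invoke the classical Samelson identity that turns this into the graded Pontrjagin commutator on the generators.

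First, I would observe that the initial arrow $\Omega G\wedge \Omega H\to \Omega S(\Omega G\wedge \Omega H)$ is the unit of the suspension-loop adjunction, so the entire composition is the adjoint of $\omega\xi\colon S(\Omega G\wedge \Omega H)\to G\vee H$. In Pontrjagin homology, this unit carries a class to its image as a primitive generator in the tensor algebra $H_*(\Omega S(\Omega G\wedge\Omega H))\cong T(\widetilde{H}_*(\Omega G\wedge\Omega H))$ furnished by James's theorem.

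Second, using the clutching description of the fiber inclusion from \cite{Gra71} (equivalently, the description of $\omega$ as the inclusion of the homotopy fiber of $G\vee H\to G\times H$), this adjoint agrees with the Samelson product
\[
[\Omega i_G,\,\Omega i_H]\colon \Omega G\wedge \Omega H\to \Omega(G\vee H),
\]
where $i_G$ and $i_H$ are the wedge inclusions. Samelson's identity then gives
\[
[\Omega i_G,\Omega i_H]_*(a\wedgebar b)=\pm\bigl[(\Omega i_G)_*(a),\,(\Omega i_H)_*(b)\bigr]
\]
for the graded commutator in the Pontrjagin algebra. By naturality of $\sigma^{-1}$ along the co-H retractions $G\vee H\to G$ and $G\vee H\to H$, the images $(\Omega i_G)_*\sigma^{-1}(x)$ and $(\Omega i_H)_*\sigma^{-1}(y)$ may be identified with $\sigma^{-1}(x)$ and $\sigma^{-1}(y)$ in $H_*(\Omega(G\vee H))$, so combining these yields $\pm[\sigma^{-1}(x),\sigma^{-1}(y)]$ as claimed.

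The main obstacle is the identification in the second step: matching the clutching model of $\omega$ with the Samelson product on the nose. A clean route is naturality in $(G,H)$. Both the map in the lemma and the Samelson product are natural in co-H maps, and every co-H space $G$ is a retract of the suspension $S\Omega G$ via the structure map $\nu_1$. This reduces the identity to the universal case in which $G$ and $H$ are themselves suspensions, where it becomes the classical Arkowitz formula relating the Whitehead product adjoint to the Samelson commutator.
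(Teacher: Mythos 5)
Your proposal is correct and lands on the same two pillars as the paper's own proof: the composite is the adjoint of $\omega\xi$, this adjoint is the loop commutator (Samelson product) of the two wedge inclusions, and the homology effect of a commutator map on the generating classes is the graded Pontrjagin commutator. The difference is entirely in how the middle identification is established. The paper does it bare-handed: it writes the explicit three-part formula for $\xi\colon SX\wedge Y\to X*Y$, the two-part formula for $\omega\colon\Omega X*\Omega Y\to X\vee Y$, composes them into a six-part formula, and reads off that the adjoint sends $(\omega_1,\omega_2)$ to $\omega_1^{-1}\omega_2^{-1}\omega_1\omega_2$; this is self-contained and valid for arbitrary $G$ and $H$ with no reduction step. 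You instead outsource the identification to the classical Ganea--Arkowitz--Samelson circle of results, via naturality and the retraction $G\to S\Omega G\to G$. That reduction does go through: every map in the composite ($\xi$, $\omega$, the James unit) is natural in arbitrary maps of $G$ and $H$, the class $\sigma^{-1}(x)\wedgebar\sigma^{-1}(y)$ lifts along $\Omega\epsilon_1\wedge\Omega\epsilon_2$ to the corresponding generator for $(S\Omega G,S\Omega H)$ because $\Omega\epsilon_1\circ E=1$, and $\Omega(\epsilon_1\vee\epsilon_2)_*$ is an algebra map carrying generators to generators, so commutators push forward to commutators. In fact the reduction to suspensions is not even needed for the identification, since the composite makes no use of the co-H structures and Ganea's description of the fiber inclusion already applies to arbitrary simply connected $G$ and $H$; what your citation-based route buys is brevity, and what the paper's explicit computation buys is independence from the literature and an argument that generalizes verbatim.
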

\begin{proof}
We first need to describe the homotopy equivalence
\[
SX\wedge Y\xrightarrow{\xi}X*Y .
\]
Here we write points of the join as $tx+(1-t)y$, $0\leq t\leq 1$,
So $X*Y$ is the quotient of $X\times I\times Y$ given by the
identifications $(x,0,y)\sim (x',0,y)$ and $(x,1,y)\sim (x,1,y')$.
Then $\xi$ is given by the formula:
\[
\xi(t,x,y)=\begin{cases}
(*,1-3t,y)&0\leq 3t\leq 1\\
(x,3t-1,y)&1\leq 3t\leq 2\\
(x,3-3t,*)&2\leq 3t\leq 3 
\end{cases} .
\]
The map $\Omega X*\Omega Y\xrightarrow{\omega}X\vee Y$ is given by
\[
(\omega_1,t,\omega_2)\to \begin{cases}
\omega_1(2t)&0\leq 2t\leq 1\\
\omega_2(2-2t)&1\leq 2t\leq 2
\end{cases} .
\]

Combining these we get
\[
S\Omega X*\Omega Y\xrightarrow{\xi}\Omega X*\Omega Y\xrightarrow{\omega}X\vee Y
\]
with a 6-part formula:
\[
(t,\omega_1,\omega_2)=\begin{cases}
(*,\omega_2(6t))&0\leq 6t\leq 1\\
*&1\leq 6t\leq 2\\
(\omega_1(6t-2),*)&2\leq 6t\leq 3\\
(*,\omega_2(4-6t))&3\leq 6t\leq 4\\
*&4\leq 6t\leq 5\\
(\omega_1(6-6t),*)&5\leq 6t\leq 6
\end{cases}
\]
so the adjoint takes the pair $(\omega_1,\omega_2)$ to
the product of loops $\omega_1^{-1}\omega_2^{-1}\omega_1\omega_2$. The effect of this
on a primitive element is the graded commutator.

Now the iterated circle product $\textit{ad}^n(H)(G)$ has
homology generated by classes of the form
\[
\left(\dots \left(\left(x\circ y_1\right)\circ y_2\right)\dots \circ y_n\right)
\]
where $x\in \widetilde{H}_*(G)$ and $y_i\in \widehat{H}_*(H)$. By \ref{prop3.3}
\[
(\Omega W)_*
\left(\sigma^{-1}\left(\dots\left(x\circ y_1\right)\circ y_2\right)\dots \circ y_n\right)
\]
is $\pm$ the graded commutator
\[
\left[\dots\left[\left[\sigma^{-1}(x),\sigma^{-1}(y_1)\right],\sigma^{-1}(y_2)\right]\dots\sigma^{-1}(y_*)\right]
\]
where the classes $x$ and $y_i$ are thought of as
classes in $\widetilde{H}_*(G\vee H)$.
\end{proof}
\begin{proof}[Proof of theorem~\ref{theor3} part \textup{(a)}:]
Now let $G_*=\widetilde{H}_*(G)$ and $H_*=\widetilde{H}_*(H)$.
Let\linebreak[4]
$L(G_*\oplus\nobreak H_*)$ be the free Lie algebra generated by $G_*$ and
$H_*$,
and $L(H_*)$ the free Lie algebra generated by~$H_*$. Then
Neisendorfer has analyzed the kernel
\[
L(G_*\vee H_*)\to L(H_*)
\]
(\cite[8.7.4]{Nei}). He has shown that this is the free Lie algebra
\[
L\left(\bigoplus_{n\geqslant 0}\textit{ad}^n(H_*)(G_*)\right)
\]
The universal enveloping algebra is thus the tensor
algebra generated by the elements $\textit{ad}^n(H_*)(G_*)$ for  $n\geqslant 0$.
Consequently the fiber of the projection $\Omega(G\vee H)\to \Omega H$ is
\[
\Omega\left(\bigvee_{n\geqslant 0}\textit{ad}^n(H)(G)\right)
\]
and this is homotopy equivalent to $\Omega(G\rtimes \Omega H)$ and the
map
\[
\bigvee_{n\geqslant 0}\textit{ad}^n(H)(G)\to G\vee H
\]
which factors through $G \rtimes \Omega H$ establishes the
homotopy equivalence in theorem~\ref{theor3}.
\end{proof}
\begin{corollary}\label{cor3.5}
\textup{(a)}\hspace*{0.5em}If $G$ is simply connected, $S\Omega G\simeq
\bigvee\limits_{n\geqslant 0}\textit{ad}^n(G)(G)$\linebreak[4]
\textup{(b)}\hspace*{0.5em}if both $G$ and $H$ are simply connected
\[
\Omega G*\Omega H\simeq \bigvee_{\substack{i\geqslant 0\\
j\geqslant 1}}\textit{ad}^j(H)\left(\textit{ad}^i(G)(G)\right) .
\]
\end{corollary}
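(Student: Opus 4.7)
The plan for part~(a) is immediate: \ref{cor3.2} identifies $S\Omega G$ with $G \rtimes \Omega G$, and applying \ref{theor3}\textup{(a)} with $H$ replaced by $G$ decomposes this as $\bigvee_{n\geq 0} \textit{ad}^n(G)(G)$. Composing these two equivalences gives (a).

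For part~(b), the idea is to bootstrap from part~(a) together with \ref{theor3}\textup{(a)} applied to the simply connected suspension $S\Omega G$. First, use the standard identifications
\[
\Omega G * \Omega H \simeq S(\Omega G \wedge \Omega H) \simeq S\Omega G \wedge \Omega H,
\]
so decomposing $\Omega G * \Omega H$ amounts to decomposing $S\Omega G \wedge \Omega H$. Since $S\Omega G$ is a simply connected suspension, the cofibration $S\Omega G \hookrightarrow S\Omega G \rtimes \Omega H \to S\Omega G \wedge \Omega H$ is split by the comultiplication of $S\Omega G$, yielding
\[
S\Omega G \rtimes \Omega H \simeq S\Omega G \vee (\Omega G * \Omega H).
\]
On the other hand, \ref{theor3}\textup{(a)} applied with $G$ replaced by $S\Omega G$ produces $S\Omega G \rtimes \Omega H \simeq \bigvee_{n\geq 0}\textit{ad}^n(H)(S\Omega G)$. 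Substituting the decomposition $S\Omega G \simeq \bigvee_{i\geq 0}\textit{ad}^i(G)(G)$ from part~(a) and invoking \ref{prop2.5} (distributivity of $\circ$ over wedges) expands each piece as $\textit{ad}^n(H)(S\Omega G) \simeq \bigvee_{i\geq 0} \textit{ad}^n(H)(\textit{ad}^i(G)(G))$.

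To finish, I would compare the two decompositions of $S\Omega G \rtimes \Omega H$. The $n=0$ summands reassemble as $\bigvee_i \textit{ad}^i(G)(G) \simeq S\Omega G$, and since $\textit{ad}^0$ is the canonical inclusion, these summands match the $S\Omega G$ wedge factor in the James splitting. The remaining $n\geq 1$ summands must therefore account for the complementary factor $\Omega G * \Omega H$, giving the claimed equivalence.

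The main obstacle is rigorously identifying the $n\geq 1$ residual with $\Omega G * \Omega H$, rather than relying on formal cancellation of the $S\Omega G$ factor. I would verify this by tracking how the equivalence $\bigvee_{n\geq 0,\,i\geq 0} \textit{ad}^n(H)(\textit{ad}^i(G)(G)) \simeq S\Omega G \rtimes \Omega H$ interacts with the projection $S\Omega G \rtimes \Omega H \to \Omega G * \Omega H$: on each $n\geq 1$ summand the composite is the iterated Whitehead product, which lifts to the fiber $\Omega G * \Omega H \subseteq G\vee H$ because any iterated commutator involving at least one $H$-letter dies in $\pi_*(G\times H)$. A homology check, using the Hilton--Milnor type splitting $\Omega(G\vee H) \simeq \Omega G \times \Omega H \times \Omega(\Omega G * \Omega H)$ together with the Neisendorfer kernel description already invoked in the proof of \ref{theor3}\textup{(a)}, then confirms that the assembled comparison map is an isomorphism on homology with any field of coefficients, hence a homotopy equivalence by simple connectivity.
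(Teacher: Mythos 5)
Your part (a) is exactly the paper's argument: compose \ref{cor3.2} with Theorem~\ref{theor3}(a) for $H=G$. For part (b) you reach the right answer by a correct but genuinely different route, and the difference is instructive. The paper also starts from $\Omega G*\Omega H\simeq S\Omega G\wedge\Omega H$, but it substitutes the part (a) decomposition and distributes the smash over the wedge \emph{first}, getting $\bigvee_{i\geqslant 0}\bigl[\textit{ad}^i(G)(G)\wedge\Omega H\bigr]$, and only then applies Theorem~\ref{theor3}(a) to each co-H summand $K=\textit{ad}^i(G)(G)$ separately: since $K\rtimes\Omega H\simeq K\vee (K\wedge\Omega H)$ for a co-H space $K$, peeling the $j=0$ term off $K\rtimes\Omega H\simeq\bigvee_{j\geqslant 0}\textit{ad}^j(H)(K)$ gives $K\wedge\Omega H\simeq\bigvee_{j\geqslant 1}\textit{ad}^j(H)(K)$ with no leftover factor to dispose of. You instead apply Theorem~\ref{theor3}(a) to $S\Omega G$ wholesale and then must cancel the $S\Omega G$ wedge summand between your two decompositions of $S\Omega G\rtimes\Omega H$; as you rightly observe, wedge summands do not cancel formally, which forces the comparison-map-and-homology argument of your last paragraph. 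That argument does go through (the lifting of the $j\geqslant 1$ products to the fiber $\Omega G*\Omega H$ and the tensor-algebra bookkeeping are both sound, and are of the same kind the paper uses for Theorem~\ref{theor3}(a) and Proposition~\ref{prop3.8}), but all it is really buying you is the compatibility of the $n=0$ summand with the retraction onto the base, which the paper gets summand-by-summand from the statement that $\iota_1$ restricts to $\textit{ad}^0=$ inclusion on the $n=0$ factor. Net effect: both proofs are valid; the paper's ordering turns part (b) into a four-line computation, while yours needs an extra cancellation lemma that you have correctly identified and can indeed supply.
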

\begin{proof}
For part (a) apply \ref{cor3.2} and theorem~\ref{theor3} part~a. For part (b), we expand
\begin{align*}
\Omega G*\Omega H
&\simeq(S\Omega G)\wedge \Omega H\\
&\simeq\left[\bigvee_{i\geqslant 0}\left(\textit{ad}^i(G)(G)\right)\right]\wedge
\Omega H\\
&\simeq\bigvee_{i\geqslant 0}\left[\textit{ad}^i(G)(G)\wedge \Omega
H\right]\\
&\simeq\bigvee_{\substack{i\geqslant 0\\
j\geqslant 1}}\textit{ad}^j(H)\left(\textit{ad}^i(G)(G)\right)
\end{align*}
using \ref{prop2.5} and theorem~\ref{theor3} part~a.

Given a Theriault product $P=G_1\circ\dots\circ G_s$ with
some fixed association, let us write $\ell(P)=s$ for the
length of~$P$.
\end{proof}
\begin{Theorem}\label{theor3.6}
Suppose $G$ and $H$ are both simply connected
co-H spaces and $k\geqslant 1$. Then there is a locally finite
collection of iterated Theriault products $\{P_{\alpha}(k)\}$
of length $\ell_{\alpha}$ and iterated Whitehead product maps:
\[
\omega_{\alpha}(k)\colon P_{\alpha}(k)\to G\vee H
\]
such that
\[
\Omega (G\vee H)\simeq
\Omega\left(\bigvee_{\ell_{\alpha}>k}P_{\alpha}(k)\right)\times
\Omega\left(\prod_{\ell_{\alpha}\leqslant k}P_{\alpha}(k)\right)
\]
and the factors of the righthand side are
mapped to the lefthand side by the~$\omega_{\alpha}(k)$.
\end{Theorem}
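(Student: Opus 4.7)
The plan is to induct on $k$. The key tool is the combination of two observations: first, for any simply connected co-H spaces $A$ and $B$, the split fibration $A \rtimes \Omega B \to A \vee B \to B$ yields a homotopy equivalence $\Omega(A \vee B) \simeq \Omega(A \rtimes \Omega B) \times \Omega B$; second, Theorem~\ref{theor3}(a) identifies $A \rtimes \Omega B$ with $\bigvee_{n \geq 0} \textit{ad}^n(B)(A)$, where each summand maps to $A \vee B$ by the corresponding iterated Whitehead product. Chaining these gives
\[
\Omega(A \vee B) \simeq \Omega\bigl(\textstyle\bigvee_{n \geq 0} \textit{ad}^n(B)(A)\bigr) \times \Omega B,
\]
and this pattern will be applied repeatedly.

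For the base case $k = 1$, apply the pattern once with $A = G$, $B = H$ to extract $\Omega H$. The summand $\textit{ad}^0(H)(G) = G$ of the resulting wedge has length $1$, so a second application of the pattern with $A = \bigvee_{n \geq 1} \textit{ad}^n(H)(G)$ and $B = G$ extracts $\Omega G$. The leftover wedge takes the form $\bigvee_{m \geq 0} \textit{ad}^m(G)\bigl(\bigvee_{n \geq 1} \textit{ad}^n(H)(G)\bigr)$, which by iterated application of Proposition~\ref{prop2.5} (distributivity of $\circ$ over $\vee$) expands into a wedge of Theriault products, each of which contains at least one factor $\textit{ad}^n(H)(G)$ with $n \geq 1$ and so has total length strictly greater than~$1$.

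For the inductive step, assume the decomposition at level $k - 1$: all Theriault products of length $\leq k - 1$ have been isolated as factors in the product, and the wedge factor contains only Theriault products of length $\geq k$. For each length-$k$ summand $P$ in the current wedge, apply the pattern with $A$ equal to the wedge of the remaining summands and $B = P$ to extract $\Omega P$, producing a new wedge $\bigvee_{m \geq 0} \textit{ad}^m(P)(A)$. Distributivity expands this into Theriault products whose lengths are either the length of a summand of $A$ (when $m = 0$) or that length plus $m \cdot k$ (when $m \geq 1$), so that processing every length-$k$ summand in turn leaves only Theriault products of length $> k$ in the wedge factor. Local finiteness, which makes the limiting process sensible, follows from Theorem~\ref{theor1}(b): since $S(G' \circ H') \simeq G' \wedge H'$, the connectivity of a Theriault product exceeds the sum of the factor connectivities, so the connectivity of length-$\ell$ Theriault products grows at least linearly in $\ell$, and only finitely many length-$k$ pieces contribute in each dimension. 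The Whitehead-product maps $\omega_\alpha(k)$ emerge automatically: on each $P_\alpha(k)$ they are obtained by composing the natural iterated Whitehead product of Theorem~\ref{theor3} with the inclusions supplied by the splitting identity.

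The main obstacle is bookkeeping. One must verify at every stage that the new summands produced by Proposition~\ref{prop2.5} really carry the iterated Whitehead products inherited from Theorem~\ref{theor3}, and that extracting length-$k$ pieces in different orders leads to the same decomposition up to homotopy equivalence. The connectivity estimate for local finiteness is routine, but it is what renders the potentially infinite process of extracting length-$k$ summands well-defined dimension by dimension.
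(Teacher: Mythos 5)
Your proposal is correct and follows essentially the same strategy as the paper: induction on $k$, repeatedly splitting off $\Omega P$ for each minimal-length summand $P$ via $\Omega(A\vee P)\simeq\Omega(A\rtimes\Omega P)\times\Omega P$, identifying $A\rtimes\Omega P$ with $\bigvee_{m}\textit{ad}^m(P)(A)$ by Theorem~\ref{theor3}(a), and distributing with Proposition~\ref{prop2.5}. The only (harmless) divergence is the base case, where the paper invokes the fibration $\Omega G*\Omega H\to G\vee H\to G\times H$ together with Corollary~\ref{cor3.5}(b), while you obtain the same decomposition by two applications of the extraction pattern.
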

\begin{proof}
For $k=1$ we use the decomposition:
\[
\Omega(G\vee H)\simeq \Omega (\Omega G*\Omega H)\times \Omega (G\times H)
\]
where $\Omega G*\Omega H$ is a boquet of iterated Theriault products of
length at least~$2$ by~\ref{cor3.5}(b). Now we proceed by
induction on~$k$. Among the finite list of products
$P_{\alpha}(k)$ of length $k+1$, choose one which we label~$P$.
Then
\begin{align*}
\Omega(\bigvee_{\ell_{\alpha}>k} P_{\alpha})&\simeq\Omega(P\vee
\bigvee_{\substack{\ell_{\alpha}>k\\
P_{\alpha}\neq P}}P_{\alpha})\\
&\simeq\Omega P\times \Omega(\bigvee_{\substack{\ell_{\alpha}>k\\
P_{\alpha}\neq P}}P_{\alpha}\rtimes \Omega P).
\end{align*}
The second factor has one less product of length $k+1$.
If we repeat this process once for each $P_{\alpha}(k)$ of length
$k+1$, we obtain:
\[
\Omega(\bigvee_{\ell(P_{\alpha})>k}P_{\alpha})=\Omega P_j\times\dots\times
\Omega P_m\times \Omega\Big(\bigvee_{\ell
(P_{\alpha})>k+1}P_{\alpha}(k+1)\Big) .
\]
Now add the $P_1\dots P_m$ to the list of $P_{\alpha}$ with
$\ell(P_{\alpha})\leq k$ to obtain all $P_{\alpha}(k+1)$ with length $\leq
k+1$.
\end{proof}
\begin{corollary}\label{cor3.7}
Suppose  $X$ is a finite dimensional
co-associative co-H space and $f\!\colon X\to G\vee H$
where $G$ and $H$ are simply connected co-H spaces.
Then~$f$ is a sum of iterated Whitehead products.
\end{corollary}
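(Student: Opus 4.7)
The plan is to reduce to a map from a suspension by passing through the co-H retraction $X\xrightarrow{\nu_X} S\Omega X\xrightarrow{\epsilon_X} X$, apply the decomposition of Theorem~\ref{theor3.6} after looping, and then use finite-dimensionality of $X$ to kill the contribution from the highly-connected factors.

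First, set $d=\dim X$ and choose an integer $k$ with $2k\geq d$. Iterating Theorem~\ref{theor1}(b) gives $SP_\alpha(k)\simeq G_1\wedge\cdots\wedge G_{\ell_\alpha}$ for every iterated Theriault product, which is $(2\ell_\alpha-1)$-connected since each factor is simply connected. Hence each $P_\alpha(k)$ with $\ell_\alpha>k$ is at least $2k$-connected, so $\bigvee_{\ell_\alpha>k}P_\alpha(k)$ is $d$-connected and $[X,\bigvee_{\ell_\alpha>k}P_\alpha(k)]=0$.

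Next, put $\tilde f=f\epsilon_X\colon S\Omega X\to G\vee H$, so that $f\simeq \tilde f\nu_X$, and let $\bar f\colon \Omega X\to \Omega(G\vee H)$ be its adjoint. By Theorem~\ref{theor3.6},
\[
\Omega(G\vee H)\simeq \Omega\Bigl(\bigvee_{\ell_\alpha>k}P_\alpha(k)\Bigr)\times \prod_{\ell_\alpha\leq k}\Omega P_\alpha(k),
\]
the equivalence being realized by the looped iterated Whitehead products $\Omega\omega_\alpha(k)$ together with the H-space multiplication on $\Omega(G\vee H)$. Projecting $\bar f$ onto each factor and re-adjointing produces maps $\tilde v\colon S\Omega X\to \bigvee_{\ell_\alpha>k}P_\alpha(k)$ and $\tilde g_\alpha\colon S\Omega X\to P_\alpha(k)$ for each $\alpha$ with $\ell_\alpha\leq k$, together with the identity
\[
\tilde f\simeq \omega_{\mathrm{long}}\tilde v\,+\sum_{\ell_\alpha\leq k}\omega_\alpha(k)\tilde g_\alpha
\]
in $[S\Omega X,G\vee H]$, where $\omega_{\mathrm{long}}$ is assembled from the $\omega_\alpha(k)$ with $\ell_\alpha>k$.

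Finally, precompose with $\nu_X$. Because $\nu_X$ is a co-H map, it sends the co-H sum of maps out of $S\Omega X$ to the co-H sum of maps out of $X$, yielding
\[
f\simeq \omega_{\mathrm{long}}\tilde v\nu_X\,+\sum_{\ell_\alpha\leq k}\omega_\alpha(k)(\tilde g_\alpha\nu_X).
\]
The first term is null by the connectivity observation of the first paragraph, and the remaining terms exhibit $f$ as a finite sum of iterated Whitehead products, applied to the maps $g_\alpha=\tilde g_\alpha\nu_X\colon X\to P_\alpha(k)$. The step I expect to require the most care is verifying that the product decomposition of $\bar f$ supplied by Theorem~\ref{theor3.6} really translates, under adjunction, into an honest co-H sum at the space level of maps $S\Omega X\to G\vee H$; this relies on the equivalence of Theorem~\ref{theor3.6} being implemented by the specific $\omega_\alpha(k)$ in a way compatible with H-space multiplication on $\Omega(G\vee H)$, and on the assembled map $\omega_{\mathrm{long}}$ from the possibly infinite wedge being well-behaved.
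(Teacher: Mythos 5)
Your proof is correct and follows essentially the same route as the paper's: both decompose $\Omega(G\vee H)$ via Theorem~\ref{theor3.6}, use the connectivity of the long Theriault products together with the finite dimensionality of $X$ to discard the infinite-wedge factor, and transfer the resulting finite sum back to $X$ through the structure map (the paper merely restricts to the $(k-1)$-skeleton of $\Omega X$ before adjointing, rather than killing the long part after precomposing with $\nu_X$ as you do). The one point you should make explicit is that your assertion that ``$\nu_X$ is a co-H map'' is precisely where the co-associativity hypothesis on $X$ enters --- for a general co-H space the Ganea structure map need not be a co-H map, and without that the final step of distributing the sum over precomposition would fail.
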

\begin{proof}
Suppose $\dim X=k$ and $f\!\colon X\to G\vee H$ is given.
Decompose $\Omega(G\vee H)$ as in theorem~\ref{theor3.6}
and note that any product $P_{\alpha}$ of length $k$ is at least
$k$ connected. Consequently the restriction of~$\Omega f$:
\[
(\Omega X)^{k-1}\xrightarrow{\Omega f}\Omega(G\vee H)
\]
factors through the product $\Omega (\prod\limits_{\ell_{\alpha}\leq
k}P_{\alpha}(k))$
and the adjoint:
\[
S\left[(\Omega X)^{k-1}\right]\to G\vee H
\]
is a sum of iterated Whitehead products.
However $f$ is the composition of this map with the
co-H space structure map:
\[
X\to S\left[(\Omega X)^{k-1}\right]
\]
which is a co-H map, so $f$ is such a sum as well.
\end{proof}
\begin{proposition}\label{prop3.8}
If $G$ and $H$ are simply connected,
then there is a homotopy equivalence
\begin{equation*}
\phi\colon (G\vee H)\cup_W C(G\circ H)\to G\times H.
\end{equation*}
\end{proposition}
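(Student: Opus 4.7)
The plan is to prove $\phi$ is a homotopy equivalence by showing it induces an isomorphism on reduced homology with an arbitrary field of coefficients. Since $G$ and $H$ are simply connected co-H spaces (of the CW homotopy type assumed throughout), so are $G\circ H$, $G\vee H$, the cofibre $C_{\W}:=(G\vee H)\cup_{\W} C(G\circ H)$, and $G\times H$. Whitehead's theorem then promotes the homology isomorphism to a homotopy equivalence.

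The first step is to observe that $\W_*=0$ on reduced homology. By construction $\W=\omega\psi$, where $\psi\colon G\circ H\to S(\Omega G\wedge \Omega H)\simeq \Omega G*\Omega H$ is the canonical retract inclusion and $\omega$ is the fibre inclusion of the fibration $\Omega G*\Omega H\to G\vee H\xrightarrow{j}G\times H$. Thus $j\omega$ is nullhomotopic. But $j_*\colon \widetilde H_*(G\vee H)=\widetilde H_*(G)\oplus \widetilde H_*(H)\hookrightarrow \widetilde H_*(G\times H)$ is a split monomorphism by K\"unneth, so $\omega_*=0$ and therefore $\W_*=0$. The long exact sequence of the cofibration $G\circ H\xrightarrow{\W}G\vee H\to C_{\W}$ then degenerates into split short exact sequences
\begin{equation*}
0\to \widetilde H_n(G\vee H)\to \widetilde H_n(C_{\W})\to \widetilde H_{n-1}(G\circ H)\to 0,
\end{equation*}
which, in view of Theorem~\ref{theor1}(b) and K\"unneth, produce the abstract additive identification $\widetilde H_n(C_{\W})\cong \widetilde H_n(G\times H)$.

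To show that $\phi$ itself realises this isomorphism, I would compare the two cofibration sequences
\begin{equation*}
G\vee H\to C_{\W}\to S(G\circ H)\quad\text{and}\quad G\vee H\to G\times H\to G\wedge H,
\end{equation*}
the first coming from the construction of $C_{\W}$ and the second being the standard splitting of a product by its wedge, linked vertically by the identity on $G\vee H$, by $\phi$ in the middle, and by a map $\bar\phi\colon S(G\circ H)\to G\wedge H$ induced between the quotients. If $\bar\phi_*$ is an isomorphism, the five-lemma applied to the long exact homology sequences forces $\phi_*$ to be one as well.

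The main obstacle is therefore the identification of $\bar\phi$ with the canonical equivalence of Theorem~\ref{theor1}(b). The key point is that the nullhomotopy used to extend $j$ across $C(G\circ H)$ is inherited from the nullhomotopy of $j\omega$ supplied by the fibration, via the factorisation $\W=\omega\psi$. Tracing this through, $\bar\phi$ becomes homotopic to the composite $S(G\circ H)\xrightarrow{S\psi} S^2(\Omega G\wedge\Omega H)\xrightarrow{S(\epsilon_1\wedge\epsilon_2)} G\wedge H$, and Lemma~\ref{lem2.6} identifies this composite with the standard equivalence $S(G\circ H)\simeq G\wedge H$. Once $\bar\phi$ is so identified, the five-lemma argument closes the proof.
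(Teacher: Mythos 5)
Your reduction is sound as far as it goes: the observation that $\W_*=\omega_*\psi_*=0$ because $j_*\colon \widetilde H_*(G\vee H)\to \widetilde H_*(G\times H)$ is injective, the resulting splitting of the homology of the cofibre, and the five-lemma comparison of the two cofibration sequences are all correct, and they correctly reduce the proposition to the single claim that the induced map $\bar\phi\colon S(G\circ H)\to G\wedge H$ is a homology isomorphism. The difficulty is that this last claim --- the only non-formal step --- is precisely where the argument stops. The assertion that ``tracing through'' the nullhomotopy identifies $\bar\phi$ with $(\epsilon_1\wedge\epsilon_2)\circ S\psi$ is a substantive theorem, not a formality: it amounts to showing that the fibre-to-cofibre comparison map $S(\Omega G*\Omega H)\to G\wedge H$ attached to the inclusion $G\vee H\to G\times H$ is, under the equivalence $S(\Omega G*\Omega H)\simeq S\Omega G\wedge S\Omega H$, the map $\epsilon_1\wedge\epsilon_2$. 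Even in the classical case $G=SA$, $H=SB$ this is the content of the theorem that $SA\times SB\simeq (SA\vee SB)\cup_{\W}C(A*B)$ with the standard identification of the top cell, and it is proved there by an explicit computation with the join coordinates and the Whitehead product formula; nothing established earlier in the paper supplies it, and Lemma~\ref{lem2.6} only becomes usable \emph{after} that identification is in hand. There is also a subsidiary point you pass over: $\phi$ is only determined up to the choice of nullhomotopy of $j\circ\W$, different choices change $\bar\phi$ by composites of the form $S(G\circ H)\to G\times H\to G\wedge H$, and your argument applies only to the particular extension arising from the path-space contraction, so that choice must be fixed before any tracing can begin.

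For contrast, the paper's proof avoids identifying $\bar\phi$ altogether. It first produces a right inverse $\zeta\colon \Omega G\times\Omega H\to\Omega C$ to $\Omega\phi$ from the loops on the inclusions of $G$ and $H$, so that $(\Omega\phi)_*$ is onto; it then bounds $\rank H_i(\Omega C)$ from above by $\rank H_i(\Omega G\times\Omega H)$ using Lemma~\ref{lem3.9}, Ganea's fibre-cofibre construction, and a Poincar\'e series computation. That route works for an arbitrary extension $\phi$ and replaces the delicate cofibre identification by a rank count; your route, if the identification of $\bar\phi$ were actually carried out, would be more direct and would avoid loop spaces entirely, but as written the central step is missing.
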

\begin{proof}
Since the composition
\[
G\circ H\to \Omega G*\Omega H\to G\vee H\to G\times H
\]
is null homotopic, there is an extension
\[
C=(G\vee H)\cup  C(G\circ H)\xrightarrow{\phi}G\times H .
\]
The problem is to show that this map is a homotopy equivalence.
We begin by observing that we can construct a right inverse $\zeta$
to $\Omega \phi$ as the sum of the loops on the inclusions of $G$ and $H$
into $C$:
\[
\Omega G\times \Omega H\xrightarrow{\zeta}\Omega
C\xrightarrow{\Omega\phi}\Omega G\times \Omega H
\]
so $(\Omega\phi)_*\colon H_*(\Omega C)\to H_*(\Omega G\times \Omega H)$ is
an epimorphism.
We will complete the proof by showing that the rank of
$H_k(\Omega C)$ is less than or equal to the rank of $H_k(\Omega G\times
\Omega H)$.
We will need several lemmas.
\end{proof}
\begin{lemma}\label{lem3.9}
Write $\Omega G*\Omega H\simeq G\circ H\vee SQ$. Then the
restriction:
\[
SQ\to \Omega G*\Omega H\xrightarrow{\omega}G\vee H\to C
\]
is  null homotopic
\end{lemma}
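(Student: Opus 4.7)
The plan is to combine the decomposition of $\Omega G\ast\Omega H$ given by Corollary \ref{cor3.5}(b) with an inductive argument showing that every iterated Whitehead product of ``higher depth'' becomes null in $C$.

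By Corollary \ref{cor3.5}(b), there is an equivalence
\[
\Omega G\ast\Omega H\simeq\bigvee_{i\geqslant 0,\,j\geqslant 1}\textit{ad}^j(H)(\textit{ad}^i(G)(G))
\]
under which the $(i,j)=(0,1)$ summand coincides with $G\circ H$ and the restriction of $\omega$ to each summand is the corresponding iterated Whitehead product. Hence $SQ$ is the wedge of all remaining summands, and the lemma reduces to showing that each such iterated Whitehead product into $G\vee H$ becomes null after composing with $\iota\colon G\vee H\to C$.

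The geometric observation that drives the argument is the following: for any co-H spaces $A,B$ with at least one simply connected, the Whitehead product $\W\colon A\circ B\to A\vee B$ becomes null after composition with either projection $A\vee B\to A$ or $A\vee B\to B$. Indeed, by construction $\W=\omega\circ\psi$ factors through $\Omega A\ast\Omega B$, which is the homotopy fiber of $A\vee B\to A\times B$, while each projection extends over $A\times B$.

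We then induct on $n\geqslant 1$ to show $\iota\circ\textit{ad}^n\simeq *$ for the pure summands $\textit{ad}^n(H)(G)\to G\vee H$. The base case $n=1$ is $\iota\circ\W$, null by the construction of $C$. For $n\geqslant 2$, write $\textit{ad}^n=\nabla\circ(\textit{ad}^{n-1}\vee 1)\circ\W$, where $\nabla$ denotes the fold; by the inductive hypothesis, the composite $\iota\circ\nabla\circ(\textit{ad}^{n-1}\vee 1)\colon\textit{ad}^{n-1}(H)(G)\vee H\to C$ vanishes on the first summand and is $j_H\colon H\hookrightarrow G\vee H\xrightarrow{\iota}C$ on the second, so it factors through the projection onto $H$. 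Precomposition with $\W$ then kills it by the geometric observation. The mixed summands $\textit{ad}^j(H)(\textit{ad}^i(G)(G))$ with $i\geqslant 1$ are handled by the same peeling-off argument applied at each nesting level, or alternatively by first collapsing the inner $G$-iteration via the fold $\textit{ad}^i(G)(G)\to G\vee G\xrightarrow{\nabla}G$ and invoking functoriality of $\textit{ad}^j(H)(-)$. The main obstacle will be the careful bookkeeping that identifies, for each summand of $SQ$, the iterated-Whitehead-product map attached to it by Corollary \ref{cor3.5}(b); once this identification is in hand, the induction proceeds uniformly.
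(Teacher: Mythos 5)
Your induction on the pure summands $\textit{ad}^n(H)(G)$ is essentially the paper's own argument, but routing the reduction through Corollary~\ref{cor3.5}(b) creates a genuine gap that the paper deliberately avoids: the mixed summands $\textit{ad}^j(H)(\textit{ad}^i(G)(G))$ with $i\geqslant 1$. Your peeling-off induction needs, at the innermost stage $j=1$, that the map
\[
\textit{ad}^i(G)(G)\vee H\longrightarrow G\vee H\longrightarrow C
\]
factor through one of the two projections. But its restriction to the first wedge summand is $\iota\circ j_G\circ(\nabla\circ\textit{ad}^i)$, an iterated Whitehead product internal to $G$ followed by the inclusion into $C$, and there is no reason for this to be null: the cone in $C=(G\vee H)\cup_{\W}C(G\circ H)$ is attached only along $\W\colon G\circ H\to G\vee H$, so it kills mixed products but not internal ones. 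The restriction to the second summand is the (nontrivial) inclusion of $H$. Hence the map factors through neither projection and your ``geometric observation'' does not apply. The proposed repair --- collapse the inner iteration by $\nabla\circ\textit{ad}^i\colon\textit{ad}^i(G)(G)\to G$ and invoke functoriality of $\textit{ad}^j(H)(-)$ --- fails for the same structural reason: $\nabla\circ\textit{ad}^i$ is a Whitehead product, hence not a co-H map, and both the Theriault product and the natural transformation $\W$ are functorial/natural only on $\Ccal\Ocal$, i.e.\ with respect to co-H maps. (A smaller point: you would also need the splitting $\Omega G*\Omega H\simeq G\circ H\vee SQ$ of the lemma to be compatible with the decomposition of Corollary~\ref{cor3.5}(b); the robust fix is to show the \emph{entire} composite $\Omega G*\Omega H\to G\vee H\to C$ is null, which kills the restriction to any complementary summand --- but that again forces you to deal with the mixed summands.)

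The paper sidesteps all of this by first composing with $\pi_2\colon G\times H\to H$: comparing the fibrations over $G\vee H$ shows that $\omega$ factors, up to homotopy, through the fiber inclusion $G\rtimes\Omega H\to G\vee H$, and Theorem~\ref{theor3}(a) decomposes $G\rtimes\Omega H$ as $\bigvee_{n\geqslant 0}\textit{ad}^n(H)(G)$ --- pure summands only, each of which (for $n\geqslant 1$) dies in $C$ by exactly the induction you ran. If you wish to keep the route through Corollary~\ref{cor3.5}(b), you need a separate argument for the summands with $i\geqslant 1$; as written the proof is incomplete.
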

\begin{proof}
We first look at the homotopy commutative diagram
\[
\xymatrix{
\Omega G*\Omega H\ar@{->}[d]\ar@{->}[r]^{\omega}&G\vee H\ar@{=}[d]\ar@{->}[r]&G\times
H\ar@{->}[d]^{\pi_2}\\   
G\rtimes \Omega H\ar@{->}[r]&G\vee H\ar@{->}[r]&H .
}
\]
Applying theorem~\ref{theor3}a we see that the composition
\[
SQ\to \Omega G*\Omega H\to G\rtimes \Omega H\to G\vee H
\]
is a sum of maps $\gamma_i$ factoring through $\textit{ad}^i(H)(G)\to G\vee H$ for $i\leqslant 2$. By
induction 
on $i$ we see that
\[
\textit{ad}^i(H)(G)\to G\vee H\to C
\]
is null homotopic for $i\geqslant 1$. This follows since
$\textit{ad}^i$ factors
\[
\textit{ad}^i(H)(G)\to\textit{ad}^{i-1}(H)(G)\vee
H\xrightarrow{\textit{ad}^{i-1}\vee 1} G\vee H\vee H\to G\vee H\to C .
\]

It follows from \ref{lem3.9} that the mapping cone of $\omega$
is homotopy equivalent to $C\vee S^2Q$. Recall that
Ganea proved \cite{Ga} that given a fibration sequence $F\to E\to B$.
One can construct a fibration sequence
\[
F*\Omega B\to E\cup CF\xrightarrow{\pi}B
\]
where $\pi$ pinches the cone on $F$ to a point. Apply this
to the fibration sequence:
\[
\Omega G*\Omega H\xrightarrow{\omega}G\vee H\to G\times H,
\]
to obtain:
\[
\Omega G*\Omega H*\Omega(G*H)\to C\vee S^2Q\xrightarrow{\pi}G\times H .
\]
It is possible that the map
$\pi\vert_{S^2Q}$ is nontrivial. However $\pi\vert_{S^2Q}$ is the sum of
the
projections onto $G$ and $H$, so it factors through $C$ up to
homotopy. Using such a factorization we can construct
a homotopy equivalence:
\[
\Gamma\colon C\vee S^2Q\to C\vee S^2Q
\]
such that $\pi\Gamma\vert_{S^2Q}$ is null homotopic. Replacing $\pi$
with $\pi\Gamma$ does not alter the homotopy type of the fiber of
$\pi$, so we can form the following diagram of fibrations:
\[
\xymatrix{
S^2Q\rtimes \Omega C\ar@{->}[d]\ar@{->}[r]&S^2Q\rtimes\Omega C\ar@{->}[d]\\
\Omega G*\Omega H*\Omega(G*H)\ar@{->}[r]\ar@{->}[d]&C\vee
S^2Q\ar@{->}[r]\ar@{->}[d]^{\pi_1}&G\times H\ar@{=}[d]\\
K\ar@{->}[r]&C\ar@{->}[r]^{\phi}&G\times H.  
}
\]
The lefthand verticle fibration has a cross section since
$\pi_1$ does, hence $K$ is a co-H space and we have a splitting:
\[
\Omega (\Omega G*\Omega H*\Omega (G\times H))\simeq \Omega
(S^2Q\rtimes \Omega C)\times\Omega K .
\]
Each space here is the loops on a co-H space, so the
homologies are all tensor algebras. It follows that for each $i\geqslant 0$
\[
\rank H_i(\Omega G*\Omega H*\Omega(G\times H))\geqslant \rank
H_i(S^2Q\rtimes \Omega C).
\]
We now calculate the Poincar\'{e} series for each of these 
spaces. Suppose $\X(G)=1+gt$ and $\X(H)=1+ht$ 
where $g$ and $h$ are polynomial in~$t$ with positive
integral coefficient. We then have the following consequences:
\begin{align*}
\X(\Omega G)&=1+\dfrac{g}{1-g}\\
\X(\Omega H)&=1+\dfrac{h}{1-h}\\
\X(\Omega G*\Omega H)&=1+\dfrac{ght}{(1-g)(1-h)}\\
\X(G\circ H)&=1+ght\\
\X(SQ)&=1+ght\dfrac{g+h-gh}{(1-g)(1-h)}\\
\X(S^2Q\rtimes \Omega C)&=1+ght^2\dfrac{gth-gh}{(1-g)(1-h)}\cdot\X(\Omega C) .
\end{align*}
On the other hand:
\begin{align*}
\X(\Omega G\times \Omega H)&=1+\dfrac{g+h-gh}{(1-g)(1-h)}\\
\intertext{Consequently}
\X(\Omega G*\Omega H*(\Omega G\times \Omega
H))&=1+\dfrac{ght^2(g+h-gh)}{(1-g)^2(1-h)^2}.
\end{align*}
It follows that $\X(\Omega C)\leq \dfrac{1}{(1-g)(1-h)}=\X(\Omega G\times
\Omega H)$
so $\rank H_i(\Omega C)\leq \rank H_i(\Omega G\times \Omega H)$ for all $i$
and thus $(\Omega\phi)_{\alpha}$ is an isomorphism and hence $\phi$ is
as well.
\end{proof}

\nocite*{}
\bibliographystyle{amsalpha}
\bibliography{Gray-Whitehead}

\end{document}